\documentclass{amsart}

\usepackage{amssymb}
\usepackage{amsmath}
\usepackage{amsthm}

\newtheorem{theorem}{Theorem}[section]
\newtheorem {prop}[theorem]{Proposition}
\newtheorem {cory}[theorem]{Corollary}
\newtheorem {lem}[theorem]{Lemma}

\theoremstyle{definition}

\theoremstyle{remark}
\newtheorem{rem}{Remark}[section]

\numberwithin{equation}{section}

\newcommand{\ph}{\varphi}
\newcommand{\al}{\alpha}
\newcommand{\er}{\varepsilon}
\newcommand{\za}{\zeta}
\newcommand{\Om}{\Omega}
\newcommand{\om}{\omega}
\newcommand{\wb}{\overline{w}}
\newcommand{\hol}{\mathrm{H}}
\newcommand{\lip}{\Lambda}

\newcommand{\Cbb}{\mathbb C}
\newcommand{\Dbb}{\mathbb D}
\newcommand{\Nbb}{\mathbb N}
\newcommand{\zz}{{\mathbb Z}_+}
\newcommand{\Rbb}{\mathbb R}

\newcommand{\cph}{C_\varphi}

\numberwithin{equation}{section}

\begin{document}

\title[Essential norms of weighted composition operators]
{Essential norms of weighted composition operators between Lipschitz spaces of arbitrary order}

\author{Evgueni Doubtsov}

\address{St.~Petersburg Department
of V.A.~Steklov Mathematical Institute,
Fontanka 27, St.~Petersburg 191023, Russia}

\email{dubtsov@pdmi.ras.ru}

\subjclass{Primary 47B33; Secondary 30H30, 30H99, 46E15, 47B38}

\keywords{Holomorphic Lipschitz space, composition operator, essential norm}

\thanks{The author was partially supported by RFBR (grant No.~17-01-00607).}

\begin{abstract}
Let $\mathbb{D}$ denote the unit disk of $\mathbb{C}$ and
let $\Lambda^\alpha(\mathbb{D})$ denote the scale of holomorphic Lipschitz spaces extended to all $\alpha\in\mathbb{R}$.
For arbitrary $\alpha, \beta\in\mathbb{R}$, we characterize the bounded weighted
composition operators from $\Lambda^\beta(\mathbb{D})$ into $\Lambda^\alpha(\mathbb{D})$
and estimate their essential norms.
\end{abstract}

\maketitle

\section{Introduction}\label{s_int}
Let $\hol(\Dbb)$  denote the space of holomorphic functions on the
unit disk $\Dbb$ of $\Cbb$.
For $\al\in\Rbb$, the space $\lip^\al(\Dbb)$  consists of those
$f\in\hol(\Dbb)$ for which
\begin{equation*}\label{e_df_lip}
|f^{(J)}(z)|(1-|z|)^{J-\al} \le C, \quad z\in\Dbb,
\end{equation*}
where $C=C(f)$ is a constant,
$f^{(J)}$ is the derivative of order $J$, $J$  is a non-negative integer such that $J>\al$.
It is well known that the definition of the space $\lip^\al(\Dbb)$
does not depend on $J$ whenever $J>\al$.
For $\al\in\Rbb$, $J\in\zz$ and $J>\al$, $\lip^\al(\Dbb)$
is a Banach space with respect to the following norm:
\[
\|f\|_{\lip^{\al, J}(\Dbb)} =
\sum_{j=0}^{J-1} |f^{(j)}(0)| +
\sup_{z\in\Dbb}|f^{(J)}(z)|(1-|z|)^{J-\al},
\]
where the first summand is defined to be zero for $J=0$.
The above norms are equivalent for different parameters $J$, $J>\alpha$,
so in what follows, we use the brief notation
$\|\cdot\|_{\lip^\al}$ in the place of
$\|\cdot\|_{\lip^{\al, J}(\Dbb)}$ with the smallest $J$, $J>\alpha$.

\subsection*{Holomorphic Lipschitz spaces}
If $\al>0$, then each $f\in \lip^\al(\Dbb)$
extends to a Lipschitz function on the unit circle $\partial\Dbb$.
So, we say that $\lip^\al(\Dbb)$, $\al>0$, is a holomorphic Lipschitz space.
The standard holomorphic Lipschitz spaces $\lip^\al(\Dbb)$
are those with $0<\alpha<1$.
Also, $\lip^\al(\Dbb)$ with $\al<1$ are often called Bloch type spaces
because the Bloch space $\lip^0(\Dbb)$ is defined with $J=1$ (see, for example, \cite{McZ03, OSZ03}).
One has $J=2$ in the definition of the classical Zygmund space $\lip^1(\Dbb)$,
so $\lip^\al(\Dbb)$, $1\le \al< 2$ or $\al<2$,
are sometimes called Zygmund type spaces (see, for example, \cite{EL13}).

The scale $\lip^\al(\Dbb)$
splits at the point $\alpha=0$.
Indeed, $\lip^0(\Dbb)$ is the classical Bloch space;
$\lip^\al(\Dbb)$, $\al<0$, is the growth space defined by the following condition:
\[
|f(z)| \le C (1-|z|)^\alpha, \quad z\in \Dbb.
\]

\subsection*{Weighted composition operators}
Given a function $g\in\hol(\Dbb)$ and a holomorphic mapping $\ph: \Dbb \to
\Dbb$, the weighted composition operator $\cph^g: \hol(\Dbb) \to
\hol(\Dbb)$ is defined by the formula
\begin{equation*}
(\cph^g f)(z) = g(z) f(\ph(z)), \quad f\in\hol(\Dbb),\quad z\in \Dbb.
\end{equation*}
If $g\equiv 1$ then $\cph^g$ is denoted by the symbol $\cph$
and it is called a composition operator.

For $\al, \beta <1$, the bounded and compact weighted composition operators
$\cph^g: \lip^\beta(\Dbb) \to \lip^\al(\Dbb)$ were characterized by
Ohno, Stroethoff and Zhao \cite{OSZ03}.
The corresponding estimates for the essential norms were obtained by
MacCluer and Zhao \cite{McZ03}.
For arbitrary $\al, \beta\in\Rbb$, the bounded and compact operators
$\cph^g: \lip^\beta(\Dbb) \to \lip^\al(\Dbb)$
were characterized in \cite{Du11}.
See also \cite{CT16, Zo17} for general approaches to related problems.

For $\al, \beta <0$, a different approach to estimates of the corresponding essential norms
was proposed by Bonet, Doma{\'n}ski and Lindstr{\"o}m \cite{BDL99}.
They applied the theory of associated weights to obtain certain
discrete conditions in terms of the monomials $z^n$ and powers $\ph^n$, $n\in\zz$.
Further results in this direction were obtained, in particular,
by Wulan, Zheng and Zhu \cite{WZZ09} for $\al=\beta =0$,
by Zhao \cite{Z10} for $\al, \beta <1$,
and by Esmaeili and Lindstr\"om \cite{EL13} for $\al, \beta <2$.

We realize both approaches mentioned above
to estimate the essential norms of the operators $\cph^g: \lip^\beta(\Dbb) \to \lip^\al(\Dbb)$
for arbitrary $\al, \beta\in\Rbb$; see Theorem~\ref{t_ess}.
Clearly, the computational complexity of the problem under consideration increases for large $\al$ and $\beta$.
So the main issue is to find appropriate test functions for the proof of the corresponding lower estimates;
see Lemma~\ref{l_test} and Proposition~\ref{p_ess}.

\subsection*{Organization of the paper}
In Section~\ref{s_bdd}, we prove discrete versions of the function-theoretic characterizations
obtained in \cite{Du11} for the bounded operators $\cph^g: \lip^\beta(\Dbb) \to \lip^\al(\Dbb)$, $\al, \beta\in\Rbb$.
The main results related to two-sided estimates of the corresponding essential norms are presented in Section~\ref{s_ess}.

\subsection*{Notation}
As usual, $C$ denotes a constant $C>0$ whose value may change from line to line.
We write $A\lesssim B$ if $A \le CB$ for a constant $C>0$;
$A\asymp B$ means that $A\lesssim B$ and $B\lesssim A$.

\section{Bounded operators}\label{s_bdd}

\subsection{Derivatives of the weighted composition}\label{ss_der}
Let $J\in\zz$. Given an $f\in\hol(\Dbb)$,  we formally calculate the
derivative $(\cph^g f)^{(J)}$.
Clearly, the Leibniz rule gives
\[
(g f(\ph))^{(J)} = \sum_{j=0}^J {{J}\choose{j}} g^{(J-j)} (f(\ph))^{(j)}.
\]
Next, Fa\`a di Bruno's formula allows to compute the derivatives $(f(\ph))^{(j)}$, $j=0,1,\dots, J$.
We omit the combinatorial details and use the formal identity
\begin{equation*}\label{e_df_Gj}
(\cph^g f)^{(J)}(z) = \sum_{j=0}^J G_j[g, \ph, J](z)
f^{(j)}(\ph(z)), \quad z\in\Dbb,
\end{equation*}
to define the functions $G_j[g, \ph, J]$, $j=0,1,\dots, J$. In particular,
\[
\aligned
G_0[g, \ph, 0]&=g;
\\
G_0[g, \ph, 1]&=g^\prime,\quad G_1[g, \ph, 1]=g\ph^\prime;
\\
G_0[g, \ph, 2]&=g^{\prime\prime},\quad G_1[g, \ph,
2]=2g^\prime\ph^\prime + g\ph^{\prime\prime},\quad G_2[g, \ph,
2]=g(\ph^\prime)^2.
\endaligned
\]

\subsection{Weight functions for $\lip^\beta (\Dbb)$}
Given $\beta\in\Rbb$, let $N\in\zz$ be the minimal number such that $N\ge \beta$.
For $J\in\zz$, define auxiliary functions $\Om_{j, \beta}(t)$, $0\le t <1$, $j=0,1,\dots, J$,
by the following rule:

\begin{equation}\label{e_Omjb}
\begin{aligned}
& \text{if } \ N>0,\ \text{ then } \ \Om_{j, \beta} \equiv 1\ \text{ for } \
 j=0,1,\dots, \min\{N-1, J\}; \\
&\text{if } \ \beta<N\le J,\ \text{ then } \ \Om_{j, \beta}(t)=(1-t)^{\beta-j}\ \text{ for } \ j=N,\dots, J; \\
&\text{if } \ \beta=N<J,\ \text{ then } \ \Om_{j, \beta}(t)=(1-t)^{\beta-j}\ \text{ for }\ j=N+1,\dots,
J; \\
&\text{if } \ \beta=N\le J,\ \text{ then } \ \Om_{N, \beta}(t) = {\log\frac{e}{1-t}}.
\end{aligned}
\end{equation}

Observe that $\Om_{j, \beta}: [0,1)\to (0, +\infty)$ is a weight, that is, a continuous increasing unbounded function.
Given a weight $\Om$, recall that the associated weight $\widetilde{\Om}$ is defined as follows (see \cite{BDL99}):
\[
\widetilde{\Om}(t) = \sup\left\{|f(t)|: f\in\hol(\Dbb),\ |f(z)|\le\Om(|z|)  \textrm{\ for\ } z\in\Dbb\right\},
\quad 0\le t <1.
\]
It is well-known that $\Om_{j, \beta} \asymp \widetilde{\Om}_{j, \beta}$ for all parameters $j$ and $\beta$
under consideration.

\subsection{Characterizations}
Combining Propositions~2.1 and 3.1 from \cite{Du11}, we have the following characterization:

\begin{prop}\label{p_bdd}
Let $\al, \beta \in\Rbb$, $J\in\zz$ and $J>\al$.
Fix the minimal $N\in \zz$ such that $\beta\le N$.
The operator $\cph^g: \lip^\beta(\Dbb) \to \lip^\al(\Dbb)$  is bounded if and only
if
\begin{equation}\label{e_bdd}
\sup_{z\in\Dbb}\frac{|G_j[g,\ph,J](z)| \Om_{j, \beta}(|\ph(z)|)}
{(1-|z|)^{\al-J}}  < \infty,\quad
j=0,1,\dots,J,
\end{equation}
where the weight functions $\Om_{j, \beta}$ are defined by \eqref{e_Omjb}.
\end{prop}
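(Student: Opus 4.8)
The statement asserts that the function-theoretic boundedness criterion of \cite{Du11} for $\cph^g\colon\lip^\beta(\Dbb)\to\lip^\al(\Dbb)$ is equivalent to the family of discrete-looking suprema in \eqref{e_bdd}, one for each $j=0,1,\dots,J$. The strategy is to rewrite the supremum defining $\|\cph^g f\|_{\lip^\al}$ using the Fa\`a di Bruno / Leibniz expansion $(\cph^g f)^{(J)}(z)=\sum_{j=0}^J G_j[g,\ph,J](z)f^{(j)}(\ph(z))$ from Subsection~\ref{ss_der}, and then to control each term $|G_j[g,\ph,J](z)|\,|f^{(j)}(\ph(z))|(1-|z|)^{J-\al}$ by the quantity $|G_j[g,\ph,J](z)|\,\Om_{j,\beta}(|\ph(z)|)(1-|z|)^{J-\al}$ appearing in \eqref{e_bdd}. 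The key point is the two-sided estimate
\[
\sup_{f\in\lip^\beta,\ \|f\|_{\lip^\beta}\le 1}|f^{(j)}(w)|\asymp \Om_{j,\beta}(|w|),\qquad w\in\Dbb,
\]
for each $j=0,\dots,J$, which is exactly the content of the remark that $\Om_{j,\beta}\asymp\widetilde{\Om}_{j,\beta}$ combined with the definition of the associated weight. This reduces everything, modulo the standard case analysis encoded in \eqref{e_Omjb}, to transferring between the supremum over $z\in\Dbb$ of a single derivative of $\cph^g f$ and the pointwise weight estimates.

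\emph{Sufficiency.} Assume \eqref{e_bdd} holds, say each supremum is at most $M$. For $f\in\lip^\beta(\Dbb)$ with $\|f\|_{\lip^\beta}\le 1$ we bound
\[
|(\cph^g f)^{(J)}(z)|(1-|z|)^{J-\al}\le\sum_{j=0}^J|G_j[g,\ph,J](z)|\,|f^{(j)}(\ph(z))|\,(1-|z|)^{J-\al}
\lesssim\sum_{j=0}^J|G_j[g,\ph,J](z)|\,\Om_{j,\beta}(|\ph(z)|)\,(1-|z|)^{J-\al}\le (J+1)M,
\]
using $|f^{(j)}(w)|\lesssim\Om_{j,\beta}(|w|)$. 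The finitely many point-evaluation terms $\sum_{i=0}^{J-1}|(\cph^g f)^{(i)}(0)|$ in the norm are handled separately: they are bounded by a finite linear combination of $|f^{(k)}(\ph(0))|$ with coefficients built from $g$ and $\ph$ at $0$, hence again controlled by $\|f\|_{\lip^\beta}$ through the same associated-weight estimates (or simply absorbed, since \eqref{e_bdd} with $z=\ph(0)$ already forces finiteness). Thus $\|\cph^g f\|_{\lip^\al}\lesssim\|f\|_{\lip^\beta}$ and $\cph^g$ is bounded.

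\emph{Necessity.} Conversely, suppose $\cph^g$ is bounded with norm $\le K$. Fix $j\in\{0,\dots,J\}$ and a point $z_0\in\Dbb$; write $w_0=\ph(z_0)$. By the definition of $\widetilde{\Om}_{j,\beta}(|w_0|)\asymp\Om_{j,\beta}(|w_0|)$ one can choose $h\in\hol(\Dbb)$ with $|h(w)|\le\Om_{j,\beta}(|w|)$ on $\Dbb$ and $|h(w_0)|\gtrsim\Om_{j,\beta}(|w_0|)$; integrating $h$ an appropriate number of times (depending on the case in \eqref{e_Omjb}) and normalizing produces a test function $f=f_{z_0,j}\in\lip^\beta(\Dbb)$ with $\|f\|_{\lip^\beta}\lesssim 1$, with $|f^{(j)}(w_0)|\gtrsim\Om_{j,\beta}(|w_0|)$, and — and this is the delicate part — with the \emph{other} derivatives $f^{(i)}(w_0)$, $i\ne j$, negligible, so that the single term $j$ dominates the Fa\`a di Bruno sum at $z_0$. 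Once such $f$ is available, boundedness gives
\[
|G_j[g,\ph,J](z_0)|\,\Om_{j,\beta}(|w_0|)\,(1-|z_0|)^{J-\al}\lesssim\Bigl|\sum_{i=0}^J G_i[g,\ph,J](z_0)f^{(i)}(w_0)\Bigr|(1-|z_0|)^{J-\al}\le\|\cph^g f\|_{\lip^\al}\lesssim K,
\]
and taking the supremum over $z_0\in\Dbb$ yields \eqref{e_bdd} for that $j$.

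\emph{Main obstacle.} As already signalled in the introduction, the genuine difficulty is the construction in the necessity part of test functions that isolate a single derivative $f^{(j)}(w_0)$ while keeping $\|f\|_{\lip^\beta}$ bounded, uniformly in $w_0$; the number of required integrations and the exact form of the weight change across the four regimes of \eqref{e_Omjb} (the cases $N>0$ with $j<N$, the growth-space-type case $\beta<N\le J$, and the two logarithmic boundary cases $\beta=N$). Since, however, the statement of the proposition is explicitly obtained by \emph{combining Propositions~2.1 and 3.1 of \cite{Du11}}, the cleanest route is to invoke those two results: Proposition~2.1 of \cite{Du11} gives the function-theoretic characterization of boundedness in terms of the quantities $|G_j[g,\ph,J](z)|\Om_{j,\beta}(|\ph(z)|)(1-|z|)^{J-\al}$ being bounded, and Proposition~3.1 there records the equivalence $\Om_{j,\beta}\asymp\widetilde{\Om}_{j,\beta}$ together with the associated test functions; assembling these two facts with the Leibniz/Fa\`a di Bruno identity of Subsection~\ref{ss_der} produces \eqref{e_bdd} directly, which is the proof we adopt.
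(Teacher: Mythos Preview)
Your proposal is correct and ultimately takes exactly the same route as the paper: the paper does not prove this proposition at all but simply records it as the combination of Propositions~2.1 and~3.1 of \cite{Du11}, which is precisely what you adopt in your final paragraph. Your added sketch of sufficiency and necessity is accurate commentary, though note that the ``negligible other derivatives'' step in your necessity argument would in fact require an induction (the test functions only kill lower derivatives, not higher ones), which is exactly the obstacle you flag and which is handled in \cite{Du11}.
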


To work with associated weights, consider the growth spaces.
Given a weight $\om: [0,1) \to (0, +\infty)$, the growth space $H^\infty_\om(\Dbb)$
consists of those $f\in\hol(\Dbb)$ for which
\[
\|f\|_{H^\infty_\om} = \sup_{z\in \Dbb} \frac{|f(z)|}{\om(|z|)} < \infty.
\]
We will need the following result:
\begin{prop}[see \cite{BDL99, Mo2k}]\label{p_zn_w}
Let $\om, \nu$ be weights on $[0,1)$, $g\in \hol(\Dbb)$ and let $\ph: \Dbb\to \Dbb$ be a holomorphic mapping.
Then
\[
\sup_{z\in \Dbb}\frac{|g(z)|\widetilde{\om}(|\ph(z)|)}{\nu(|z|)} \asymp
\sup_{n\in\zz}
\frac{\|g\ph^n\|_{H^\infty_\nu}}{\|z^n\|_{H^\infty_\om}}.
\]
\end{prop}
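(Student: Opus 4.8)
The plan is to prove the two inequalities separately. For the inequality ``$\gtrsim$'' (the supremum over $z$ dominates the supremum over $n$), I would argue as follows. Fix $n\in\zz$ and choose $f_n\in\hol(\Dbb)$ with $\|f_n\|_{H^\infty_\om}\le 1$ that nearly attains $\widetilde{\om}(t)$ at a given point, i.e.\ $|f_n(t)|\ge (1-\er)\widetilde{\om}(t)$; more simply, since $|z^n|\le |z|^n\le 1$ and the monomial $z^n$ is a legitimate competitor in the definition of $\widetilde{\om}$ after normalization, we have $|w|^n \le \|z^n\|_{H^\infty_\om}\,\widetilde{\om}(|w|)$ for every $w\in\Dbb$. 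Plugging $w=\ph(z)$ and multiplying by $|g(z)|/\nu(|z|)$ gives
\[
\frac{|g(z)\ph(z)^n|}{\nu(|z|)} \le \|z^n\|_{H^\infty_\om}\,\frac{|g(z)|\widetilde{\om}(|\ph(z)|)}{\nu(|z|)},
\]
and taking the supremum over $z\in\Dbb$ followed by dividing by $\|z^n\|_{H^\infty_\om}$ and taking the supremum over $n$ yields the desired direction.

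For the reverse inequality ``$\lesssim$'', I would use the standard fact (from \cite{BDL99}) that the associated weight $\widetilde{\om}$ can be recovered, up to a constant, from monomials: there is an absolute constant so that for every $t\in[0,1)$ one can find $n=n(t)\in\zz$ with
\[
\widetilde{\om}(t)\lesssim \frac{t^n}{\|z^n\|_{H^\infty_\om}}.
\]
(This is where the structure of $\om$ as a weight enters; for the specific weights $\Om_{j,\beta}$ it also follows from the known asymptotics of $\|z^n\|_{H^\infty_{\Om_{j,\beta}}}$, but the general statement is what \cite{BDL99, Mo2k} provide.) Given $z\in\Dbb$, apply this with $t=|\ph(z)|$ to get an index $n$; then
\[
\frac{|g(z)|\widetilde{\om}(|\ph(z)|)}{\nu(|z|)} \lesssim \frac{|g(z)|\,|\ph(z)|^n}{\|z^n\|_{H^\infty_\om}\,\nu(|z|)}
\le \frac{1}{\|z^n\|_{H^\infty_\om}}\sup_{\za\in\Dbb}\frac{|g(\za)\ph(\za)^n|}{\nu(|\za|)}
= \frac{\|g\ph^n\|_{H^\infty_\nu}}{\|z^n\|_{H^\infty_\om}},
\]
which is bounded by the right-hand supremum over $n$. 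Taking the supremum over $z$ finishes this direction.

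The main obstacle is the estimate $\widetilde{\om}(t)\lesssim t^n/\|z^n\|_{H^\infty_\om}$ in the reverse direction: one must verify that the weights in play are ``regular enough'' that the associated weight is comparable to the envelope of normalized monomials rather than of arbitrary holomorphic functions. For the weights $\Om_{j,\beta}$ appearing in Proposition~\ref{p_bdd} this is already covered by the remark that $\Om_{j,\beta}\asymp\widetilde{\Om}_{j,\beta}$ together with the explicit computation $\|z^n\|_{H^\infty_{\Om_{j,\beta}}}\asymp \widetilde{\Om}_{j,\beta}(1-1/n)$ (here one optimizes $t^n\Om_{j,\beta}(t)^{-1}$ over $t$, which for the power weights $(1-t)^{\beta-j}$ and the logarithmic weight is an elementary calculus exercise), so I would either invoke \cite{BDL99, Mo2k} directly for the general weights or, if a self-contained argument is preferred, carry out these elementary asymptotics for each of the four cases in \eqref{e_Omjb}. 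Everything else is a routine chain of inequalities.
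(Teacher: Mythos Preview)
The paper does not supply its own proof of Proposition~\ref{p_zn_w}; the result is simply quoted from \cite{BDL99, Mo2k}. So there is nothing to compare your argument against, and your outline is in fact the standard route taken in those references.

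Your treatment of the direction ``$\gtrsim$'' is clean and correct (and actually gives constant $1$, since $z^n/\|z^n\|_{H^\infty_\om}$ is an admissible competitor in the definition of $\widetilde{\om}$). For the direction ``$\lesssim$'' you correctly isolate the only substantive point: the inequality
\[
\widetilde{\om}(t)\;\lesssim\;\sup_{n\in\zz}\frac{t^n}{\|z^n\|_{H^\infty_\om}},\qquad 0\le t<1,
\]
which says that the associated weight is recovered, up to an absolute constant, by the envelope of normalized monomials. This is precisely the nontrivial content of the cited results in \cite{BDL99, Mo2k}, so invoking them here is not circular but is effectively quoting the proposition itself; a self-contained proof would have to establish this lemma directly (for general weights it uses the log-convexity of $t\mapsto 1/\widetilde{\om}(t)$, while for the specific $\Om_{j,\beta}$ one can, as you note, argue by the elementary asymptotics $\|z^n\|_{H^\infty_{\Om_{j,\beta}}}\asymp \Om_{j,\beta}(1-1/(n+1))^{-1}$). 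With that lemma in hand, the rest of your chain of inequalities is routine and correct.
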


Recall that $\lip^\beta(\Dbb)$, $\beta<0$, is the growth space which consists
of $f\in\hol(\Dbb)$ such that $|f(z)| \le C (1-|z|)^\beta$.

\begin{cory}
Let $\al, \beta \in\Rbb$, $J\in\zz$ and $J>\al$.
Fix the minimal $N\in \zz$ such that $\beta\le N$.
The operator $\cph^g: \lip^\beta(\Dbb) \to \lip^\al(\Dbb)$  is bounded if and only
if
\begin{align}
G_j[g,\ph,J] \in \lip^{\al-J}(\Dbb),\quad
&j=0,1,\dots,\min\{N-1, J\}, \label{e_bdd_equiv}\\
 \sup_{n\in\zz}\frac{\|G_j[g,\ph,J] \ph^n\|_{\lip^{\al-J}}}
 {\|z^n\|_{H^\infty_{\Om_{j, \beta}}}}  < \infty,\quad
&j= N,\dots,J,\label{e_bdd_equiv2}
\end{align}
where the weight functions $\Om_{j, \beta}$ are defined by \eqref{e_Omjb}.
\end{cory}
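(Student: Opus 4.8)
The plan is to deduce the Corollary directly from Proposition~\ref{p_bdd} by translating, for each fixed $j\in\{0,1,\dots,J\}$, the supremum condition \eqref{e_bdd} into the asserted discrete form. First observe that, since $J>\al$, we have $\al-J<0$; hence $\lip^{\al-J}(\Dbb)$ is the growth space consisting of those $F\in\hol(\Dbb)$ with $\sup_{z\in\Dbb}|F(z)|(1-|z|)^{J-\al}<\infty$, and moreover $\nu(t):=(1-t)^{\al-J}$ is a genuine weight on $[0,1)$. For $F\in\hol(\Dbb)$ this gives the identity $\|F\|_{H^\infty_\nu}=\sup_{z\in\Dbb}|F(z)|(1-|z|)^{J-\al}=\|F\|_{\lip^{\al-J}}$. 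Likewise, a glance at \eqref{e_Omjb} shows that each $\Om_{j,\beta}$ is a weight in the sense used above.

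Next I would split the indices. For $0\le j\le\min\{N-1,J\}$ --- a range that is non-empty only when $N\ge 1$ --- we have $\Om_{j,\beta}\equiv 1$, so \eqref{e_bdd} reads $\sup_{z\in\Dbb}|G_j[g,\ph,J](z)|(1-|z|)^{J-\al}<\infty$, which is precisely $G_j[g,\ph,J]\in\lip^{\al-J}(\Dbb)$. This yields the equivalence of \eqref{e_bdd} with \eqref{e_bdd_equiv} for these $j$. When $N=0$ this range is empty and condition \eqref{e_bdd_equiv} is vacuous, consistently with the fact that \eqref{e_bdd_equiv2} then runs over all $j=0,\dots,J$.

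For $N\le j\le J$ I would invoke Proposition~\ref{p_zn_w}, applied with $\om=\Om_{j,\beta}$, with $\nu(t)=(1-t)^{\al-J}$, and with $g$ there replaced by $G_j[g,\ph,J]\in\hol(\Dbb)$. Using the comparison $\Om_{j,\beta}\asymp\widetilde{\Om}_{j,\beta}$, we obtain
\[
\sup_{z\in\Dbb}\frac{|G_j[g,\ph,J](z)|\,\Om_{j,\beta}(|\ph(z)|)}{(1-|z|)^{\al-J}}
\asymp
\sup_{z\in\Dbb}\frac{|G_j[g,\ph,J](z)|\,\widetilde{\Om}_{j,\beta}(|\ph(z)|)}{\nu(|z|)}
\asymp
\sup_{n\in\zz}\frac{\|G_j[g,\ph,J]\ph^n\|_{H^\infty_\nu}}{\|z^n\|_{H^\infty_{\Om_{j,\beta}}}}.
\]
Since $\|G_j[g,\ph,J]\ph^n\|_{H^\infty_\nu}=\|G_j[g,\ph,J]\ph^n\|_{\lip^{\al-J}}$ by the identity recorded above, finiteness of the left-hand side is equivalent to \eqref{e_bdd_equiv2}. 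Combining the two ranges of $j$ with Proposition~\ref{p_bdd} proves the Corollary.

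As for obstacles, there are essentially none of substance: the argument is a straightforward dictionary between Proposition~\ref{p_bdd} and Proposition~\ref{p_zn_w}. The only points needing a little care are confirming that $\nu$ and the $\Om_{j,\beta}$ are admissible weights (immediate from $\al-J<0$ and from \eqref{e_Omjb}) and tracking the degenerate case $N=0$, in which the first block of indices disappears; both are dealt with above.
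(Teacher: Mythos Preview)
Your proposal is correct and follows essentially the same approach as the paper: split the indices according to whether $\Om_{j,\beta}\equiv 1$ or $\Om_{j,\beta}$ is a genuine weight, identify \eqref{e_bdd} with \eqref{e_bdd_equiv} in the first range, and in the second range apply Proposition~\ref{p_zn_w} with $\om=\Om_{j,\beta}$, $\nu(t)=(1-t)^{\al-J}$, $g=G_j[g,\ph,J]$ together with $\Om_{j,\beta}\asymp\widetilde{\Om}_{j,\beta}$. Your additional remarks on $\al-J<0$, the norm identity $\|\cdot\|_{H^\infty_\nu}=\|\cdot\|_{\lip^{\al-J}}$, and the degenerate case $N=0$ are helpful elaborations but do not change the argument.
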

\begin{proof}
For $j=0,1,\dots,\min\{N-1, J\}$, we have $\Om_{j, \beta}\equiv 1$,
so \eqref{e_bdd_equiv} coincides with \eqref{e_bdd}.
If $J\ge N$, then $\Om_{j, \beta}$, $j= N,\dots,J$, is a weight function.
  As mentioned above, $\Om_{j, \beta} \asymp \widetilde{\Om}_{j, \beta}$;
  so we apply Proposition~\ref{p_bdd} and Proposition~\ref{p_zn_w}
  with $\om = \Om_{j, \beta}$, $\nu(t) =(1-t)^{\al-J}$ and $g=G_j[g, \ph, J]$.
\end{proof}

\begin{rem}
  The norms $\|z^n\|_{H^\infty_{\Om_{j, \beta}}}$ used in \eqref{e_bdd_equiv2}
  are known up to multiplicative constants (see, for example, \cite[Lemma~2.1]{HL12}):
 \begin{align*}
  \|z^n\|_{H^\infty_{\Om_{j, \beta}}}
  &\asymp (n+1)^{\beta-j}\ \textrm{for}\ \Om_{j, \beta}(t)=(1-t)^{\beta-j},\ \beta<j; \\
  \|z^n\|_{H^\infty_{\Om_{N, N}}}
  &\asymp \frac{1}{\log (n+2)}\ \textrm{for}\ \Om_{N, N}(t)=\log \frac{e}{1-t}.
\end{align*}
\end{rem}

\section{Essential norms}\label{s_ess}

\subsection{Upper estimates}
Let $D_\beta: \lip^\beta \to \lip^{\beta-1}$ denote the
differentiation operator and let $k\in\zz$ be the smallest number
such that $k>\beta$.
Put
\[
\lip^\beta_0(\Dbb) = \{f\in\lip^\beta(\Dbb): f(0)=0\ \textrm{and}\ f^\prime(0)=\dots =f^{(k-1)}(0)=0 \ \textrm{if}\ k\ge 2\}.
\]
Then $D_\beta$ is an isometry on $\lip^\beta_0(\Dbb)$ for $k\ge 1$
and $D_\beta$ is an isomorphism on $\lip^\beta_0$ for $k=0$.

We have
\begin{align*}
  D_{\al-J+1}\dots
&D_{\al-1} D_\al \cph^g
 D_\beta^{-1} D_{\beta-1}^{-1}\dots D_{\beta-J+1}^{-1} \\
&=
\sum_{j=0}^{J-1} \cph^{G_j} D_{\beta-j}^{-1}\dots D_{\beta-J+1}^{-1}
+ \cph^{G_j},
\end{align*}
where $G_j = G_j[g,\ph,J]$.
Therefore,
\begin{equation}\label{e_ess_lem}
\|\cph^g\|_{e, \lip^\beta_0 \to \lip^\al} \lesssim
\sum_{j=0}^{J} \|\cph^{G_j}\|_{e, \lip^{\beta-j}_0 \to \lip^{\al-J}}.
\end{equation}
Observe that every compact operator $Q: \lip^\beta_0(\Dbb) \to \lip^\al(\Dbb)$
extends to a compact operator from $\lip^\beta(\Dbb)$ into $\lip^\al(\Dbb)$.
Moreover, standard arguments guarantee that
$\|\cph^g\|_{e, \lip^\beta \to \lip^\al}  = \|\cph^g\|_{e, \lip^\beta_0 \to \lip^\al}$
(see, for example, \cite[Lemma~3.1]{EL13}, where $\al, \beta <2$).
Also, by Proposition~\ref{p_bdd}, if $\cph^g: \lip^\beta(\Dbb)\to \lip^\al(\Dbb)$
is bounded, then $\cph^{G_j}: \lip^{\beta-j}(\Dbb) \to \lip^{\al-J}(\Dbb)$ is bounded,
$j=0,1,\dots, J$.
Thus, \eqref{e_ess_lem} implies the following lemma:

\begin{lem}\label{l_upper}
Let $\al, \beta \in\Rbb$, $J\in\zz$ and $J>\al$.
Assume that the weighted composition operator $\cph^g: \lip^\beta(\Dbb) \to \lip^\al(\Dbb)$
is bounded.
Then $\cph^{G_j}: \lip^{\beta-j}(\Dbb) \to \lip^{\al-J}(\Dbb)$ is bounded,
$j=0,1,\dots, J$, and
\[
\|\cph^g\|_{e, \lip^\beta \to \lip^\al} \lesssim
\sum_{j=0}^{J} \|\cph^{G_j}\|_{e, \lip^{\beta-j} \to \lip^{\al-J}}.
\]
\end{lem}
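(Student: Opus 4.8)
The plan is to peel off $J$ derivatives, thereby reducing the operator to a sum of weighted composition operators whose target is a \emph{growth} space, where everything is under control. Since $J>\al$ we have $\al-J<0$, so for $h\in\lip^\al(\Dbb)$ the norm with parameter $J$ splits exactly as $\|h\|_{\lip^\al}=\sum_{i=0}^{J-1}|h^{(i)}(0)|+\|h^{(J)}\|_{\lip^{\al-J}}$; hence $h\mapsto\big((h^{(i)}(0))_{i=0}^{J-1},\,h^{(J)}\big)$ is an isometric isomorphism of $\lip^\al(\Dbb)$ onto $\Cbb^J\times\lip^{\al-J}(\Dbb)$ equipped with the sum norm. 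Composing this isomorphism with $\cph^g$ and discarding the component $f\mapsto\big((\cph^g f)^{(i)}(0)\big)_{i=0}^{J-1}$ --- which is bounded as soon as $\cph^g:\lip^\beta(\Dbb)\to\lip^\al(\Dbb)$ is bounded, hence finite rank, hence invisible to the essential norm --- I would obtain
\[
\|\cph^g\|_{e,\lip^\beta\to\lip^\al}=\big\|\,f\mapsto(\cph^g f)^{(J)}\,\big\|_{e,\lip^\beta\to\lip^{\al-J}}.
\]

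Into the right-hand side I would substitute the formal identity of Subsection~\ref{ss_der} that \emph{defines} the functions $G_j=G_j[g,\ph,J]$, namely $(\cph^g f)^{(J)}(z)=\sum_{j=0}^{J}G_j(z)f^{(j)}(\ph(z))=\sum_{j=0}^{J}\big(\cph^{G_j}(f^{(j)})\big)(z)$. Since $f\mapsto f^{(j)}$ is bounded from $\lip^\beta(\Dbb)$ into $\lip^{\beta-j}(\Dbb)$ (immediate from the defining inequalities of these spaces, with a common parameter), subadditivity of the essential norm together with its ideal property bound the preceding quantity by a constant times $\sum_{j=0}^{J}\|\cph^{G_j}\|_{e,\lip^{\beta-j}\to\lip^{\al-J}}$. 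It remains to check that this sum is finite, i.e.\ that each $\cph^{G_j}:\lip^{\beta-j}(\Dbb)\to\lip^{\al-J}(\Dbb)$ is bounded; this I would read off from Proposition~\ref{p_bdd}. Since $\al-J<0$, the boundedness criterion there (taken with parameter $0$) for $\cph^{G_j}$ collapses to the single condition $\sup_{z\in\Dbb}|G_j(z)|\,\Om_{0,\beta-j}(|\ph(z)|)(1-|z|)^{J-\al}<\infty$, which is precisely the $j$-th of the $J+1$ inequalities \eqref{e_bdd} satisfied by $\cph^g:\lip^\beta(\Dbb)\to\lip^\al(\Dbb)$, once one knows $\Om_{0,\beta-j}\asymp\Om_{j,\beta}$. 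Thus boundedness of $\cph^g$ forces boundedness of every $\cph^{G_j}$, and the lemma follows. (The estimate \eqref{e_ess_lem} in the discussion above is the same bound, obtained there by conjugating $\cph^g$ with iterated anti-derivatives on the subspaces $\lip^\gamma_0(\Dbb)$; the route sketched here has the minor advantage of never needing the $D_\gamma$ to be invertible, so it applies verbatim on the full spaces.)

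Since Proposition~\ref{p_bdd} and the derivative calculus of Subsection~\ref{ss_der} are already available, no step here is a serious obstacle --- the statement is essentially a repackaging of the preparatory material. The one point requiring care is the weight identification in the final step, namely $\Om_{0,\beta-j}\asymp\Om_{j,\beta}$, or equivalently the assertion that the boundedness criterion of Proposition~\ref{p_bdd} for $\cph^{G_j}:\lip^{\beta-j}\to\lip^{\al-J}$ is exactly the $j$-th component of that criterion for $\cph^g:\lip^\beta\to\lip^\al$; this must be verified uniformly across the regimes of \eqref{e_Omjb}, including the logarithmic weight $\log\frac{e}{1-t}$ that appears when $\beta$ is a non-negative integer. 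Had one to establish the identity $(\cph^g f)^{(J)}=\sum_{j}G_j\cdot f^{(j)}(\ph)$ from scratch rather than cite it, the Leibniz--Fa\`a di Bruno combinatorics underlying the $G_j$ would be the genuinely laborious ingredient; here it is granted.
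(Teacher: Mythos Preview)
Your argument is correct and follows the same core idea as the paper: differentiate $J$ times, use the decomposition $(\cph^g f)^{(J)}=\sum_j G_j\cdot f^{(j)}\circ\ph$, and then invoke Proposition~\ref{p_bdd} (together with the identity $\Om_{0,\beta-j}=\Om_{j,\beta}$, which you correctly flag and which indeed holds in every regime of \eqref{e_Omjb}) to see that each $\cph^{G_j}$ is bounded. The only difference is bookkeeping on the source side: the paper passes to the subspaces $\lip^\gamma_0$ so that the $D_\gamma$ become invertible and then transfers back via $\|\cph^g\|_{e,\lip^\beta\to\lip^\al}=\|\cph^g\|_{e,\lip^\beta_0\to\lip^\al}$, whereas you split the target as $\Cbb^J\times\lip^{\al-J}$, discard the finite-rank piece, and use only the boundedness (not invertibility) of $f\mapsto f^{(j)}$ together with the ideal property of the essential norm---a slight streamlining you yourself note.
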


Various modifications of the following compactness criterium are well-known.

\begin{lem}\label{l_cmp}
Let $\al, \beta \in\Rbb$ and let $\cph^g: \lip^\beta(\Dbb) \to \lip^\al(\Dbb)$
be bounded.
Then $\cph^g: \lip^\beta(\Dbb) \to \lip^\al(\Dbb)$ is compact if and only if
$\|\cph^g f_n\|_{\lip^\al} \to 0$ as $n\to \infty$
for any bounded sequence $\{f_n\}_{n=1}^\infty \subset \lip^\beta(\Dbb)$
such that $f_n \to 0$ uniformly on compact subsets of $\Dbb$.
\end{lem}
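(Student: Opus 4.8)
The plan is to prove the standard compactness criterion (Lemma~\ref{l_cmp}) by the usual normal-families argument, adapted to the Lipschitz scale $\lip^\al(\Dbb)$ for arbitrary $\al\in\Rbb$. Fix $J\in\zz$ with $J>\al$ and work with the concrete norm $\|f\|_{\lip^\al}=\sum_{j=0}^{J-1}|f^{(j)}(0)|+\sup_{z\in\Dbb}|f^{(J)}(z)|(1-|z|)^{J-\al}$.

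\emph{Sufficiency.} Suppose $\|\cph^g f_n\|_{\lip^\al}\to 0$ for every bounded sequence $\{f_n\}\subset\lip^\beta(\Dbb)$ converging to $0$ uniformly on compacta. To show $\cph^g$ is compact, take an arbitrary sequence $\{h_n\}$ in the unit ball of $\lip^\beta(\Dbb)$; I must extract a subsequence whose image converges in $\lip^\al(\Dbb)$. Since the unit ball of $\lip^\beta(\Dbb)$ is a normal family (membership in $\lip^\beta(\Dbb)$ forces the growth bound $|h(z)|\lesssim\|h\|_{\lip^\beta}(1-|z|)^{\min\{\beta,0\}}$ up to a logarithmic factor when $\beta$ is a nonnegative integer, hence local boundedness), Montel's theorem yields a subsequence $\{h_{n_k}\}$ converging uniformly on compacta to some $h\in\hol(\Dbb)$; Fatou-type lower semicontinuity of the Lipschitz seminorm gives $h\in\lip^\beta(\Dbb)$ with $\|h\|_{\lip^\beta}\le 1$, actually $\le 2$ after the usual adjustment. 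Then $f_k:=h_{n_k}-h$ is a bounded sequence in $\lip^\beta(\Dbb)$ tending to $0$ uniformly on compacta, so by hypothesis $\|\cph^g f_k\|_{\lip^\al}\to 0$, i.e.\ $\cph^g h_{n_k}\to \cph^g h$ in $\lip^\al(\Dbb)$. Hence $\cph^g$ maps bounded sequences to sequences with convergent subsequences, so it is compact.

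\emph{Necessity.} Conversely, assume $\cph^g:\lip^\beta(\Dbb)\to\lip^\al(\Dbb)$ is compact and let $\{f_n\}\subset\lip^\beta(\Dbb)$ be bounded with $f_n\to 0$ uniformly on compacta. Suppose, for contradiction, that $\|\cph^g f_n\|_{\lip^\al}\not\to 0$; passing to a subsequence we may assume $\|\cph^g f_n\|_{\lip^\al}\ge\delta>0$ for all $n$. By compactness, a further subsequence $\{\cph^g f_{n_k}\}$ converges in $\lip^\al(\Dbb)$ to some $F\in\lip^\al(\Dbb)$. On the other hand $f_{n_k}\to 0$ uniformly on compacta forces $(\cph^g f_{n_k})(z)=g(z)f_{n_k}(\ph(z))\to 0$ pointwise on $\Dbb$, and $\lip^\al$-convergence implies pointwise convergence of functions and of all derivatives up to order $J$ (uniformly on compacta), so $F\equiv 0$; this contradicts $\|F\|_{\lip^\al}=\lim_k\|\cph^g f_{n_k}\|_{\lip^\al}\ge\delta$. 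Therefore $\|\cph^g f_n\|_{\lip^\al}\to 0$.

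The only point requiring a little care—and the main obstacle—is the passage ``$\lip^\al$-convergence $\Rightarrow$ uniform-on-compacta convergence of the $J$-th derivatives,'' since for integer $\al\ge 0$ the norm is defined with $J=\alpha+1$ (or $J=2$ in the Zygmund case) and one must check that control of $\sup_z|F^{(J)}(z)|(1-|z|)^{J-\al}$ together with the finitely many Taylor coefficients at $0$ genuinely controls $F$ locally uniformly; this follows by integrating the $J$-th derivative bound $J$ times from $0$, which is routine. All the other steps are the standard Montel/Fatou machinery, uniform across $\al,\beta\in\Rbb$ because the growth-space embedding and the lower semicontinuity of the Lipschitz seminorm hold verbatim in every case of \eqref{e_Omjb}.
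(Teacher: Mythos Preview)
Your proof is correct and is the standard normal-families argument for this criterion. Note, however, that the paper does not actually prove Lemma~\ref{l_cmp}: it merely introduces the statement with ``Various modifications of the following compactness criterium are well-known'' and moves on. So there is no paper proof to compare against; what you have written supplies the details the author chose to omit, and does so without error. One minor remark: in the necessity direction you only need that $\lip^\al$-convergence implies pointwise convergence (point evaluations are continuous linear functionals on $\lip^\al(\Dbb)$), which is immediate from the norm definition; the stronger claim about uniform-on-compacta convergence of all derivatives up to order $J$ is true but not needed to conclude $F\equiv 0$.
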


Also, we will use the following standard property of sequences in $\lip^\beta(\Dbb)$.

\begin{lem}\label{l_uniform0}
Let $\beta \in\Rbb$ and let $N\in \zz$ be the minimal number such that $N\ge \beta$.
Assume that $\|f_n\|_{\lip^\beta} \le C$ and
$f_n \to 0$ uniformly on compact subsets of $\Dbb$.
If $N\ge 1$, then $f_n^{(j)}\rightrightarrows 0$ on $\Dbb$ for $j=0, 1, \dots N-1$.
\end{lem}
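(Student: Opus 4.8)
The plan is to exploit the estimate controlling $f^{(N)}$ in the $\lip^\beta$ norm together with repeated integration to recover uniform control of the lower-order derivatives on compact subsets. First I would observe that the hypothesis $\|f_n\|_{\lip^\beta}\le C$ gives, by the definition of the norm with $J=N$ (note $N>\beta$ unless $\beta=N$, but in either case the relevant estimate yields) $|f_n^{(N)}(z)|(1-|z|)^{N-\beta}\le C$, hence $|f_n^{(N)}(z)|\le C(1-|z|)^{\beta-N}$. When $\beta-N<0$ this blows up near $\partial\Dbb$, but on any fixed compact $K=\{|z|\le r\}$ it is bounded by a constant depending only on $r$ and $C$. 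So the family $\{f_n^{(N-1)}\}$ has uniformly bounded derivative on each such disk and is therefore equicontinuous there; combined with pointwise convergence $f_n^{(N-1)}(z)\to 0$ (which follows from uniform-on-compacta convergence of $f_n$ and Cauchy's integral formula for derivatives, or from Weierstrass's theorem on uniform limits of holomorphic functions), Arzel\`a--Ascoli gives $f_n^{(N-1)}\rightrightarrows 0$ on $K$. Since $K$ was an arbitrary closed subdisk, $f_n^{(N-1)}\rightrightarrows 0$ on $\Dbb$ in the sense of uniform convergence on compact subsets.

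The main point is then a finite downward induction on $j$ from $N-1$ to $0$. The cleanest route is to avoid Arzel\`a--Ascoli at each step and instead use the integral representation
\[
f_n^{(j)}(z) = f_n^{(j)}(0) + \int_0^z f_n^{(j+1)}(\zeta)\,d\zeta .
\]
Here $f_n^{(j)}(0)$ is one of the coefficients appearing in the $\lip^\beta$ norm (for $j=0,\dots,N-1$), and one knows $f_n^{(j)}(0)\to 0$: indeed $f_n\to 0$ uniformly on compacta forces every Taylor coefficient, hence every derivative at $0$, to tend to $0$ (again by Cauchy's formula). Given $f_n^{(j+1)}\rightrightarrows 0$ on compacta and $f_n^{(j)}(0)\to 0$, the displayed identity shows $f_n^{(j)}\rightrightarrows 0$ on compacta, since for $z$ in a fixed subdisk the integral is bounded by the length of the path times the sup of $|f_n^{(j+1)}|$ on that subdisk, which tends to $0$. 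Iterating down to $j=0$ completes the proof.

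The only genuinely delicate bookkeeping is the base case and the role of $N$ versus the parameter $J$ used in the norm: one must make sure the chosen $J$ in $\|\cdot\|_{\lip^\beta}$ satisfies $J\ge N$ so that the coefficients $f_n^{(0)}(0),\dots,f_n^{(N-1)}(0)$ are all literally among the summands of the norm, or else handle the case $\beta=N$ (where the smallest admissible $J$ is $N$ itself and the sup-term involves $\log\frac{e}{1-|z|}$ rather than a power) separately --- but in that case too one has $|f_n^{(N)}(z)|\le C\log\frac{e}{1-|z|}/(1-|z|)^{0}$, wait, more precisely $|f_n^{(N)}(z)|(1-|z|)^{N-\beta}=|f_n^{(N)}(z)|\le C\log\frac{e}{1-|z|}$, which is again locally bounded, so the same argument applies. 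I expect this case-splitting on the value of $\beta$ relative to $N$, and the verification that derivatives at $0$ converge, to be the part requiring the most care, though neither is conceptually hard; everything else is a routine Arzel\`a--Ascoli / integration argument. Note finally that for $N=0$ (i.e. $\beta\le 0$) the statement is vacuous, and for $N\ge 1$ the above covers all cases.
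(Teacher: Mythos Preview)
Your argument establishes only that $f_n^{(j)}\to 0$ uniformly on compact subsets of $\Dbb$, and you explicitly interpret the conclusion ``$f_n^{(j)}\rightrightarrows 0$ on $\Dbb$'' in that sense. But that reading makes the lemma trivial: once $f_n\to 0$ uniformly on compacta, Cauchy's integral formula (or Weierstrass's theorem) already gives $f_n^{(j)}\to 0$ uniformly on compacta for every $j$, with no use of the $\lip^\beta$ bound at all. The actual content of the lemma---and what is needed later in the paper when one must control $f_n(\ph(z))$ uniformly over all $z\in\Dbb$---is that the convergence is uniform on the \emph{whole} open disk $\Dbb$, including points arbitrarily close to $\partial\Dbb$.

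The idea you are missing is a tail estimate near the boundary. From the $\lip^\beta$ bound one has $|f_n^{(N)}(w)|\lesssim \Om_{N,\beta}(|w|)$, and the crucial observation is that $\int_r^1 \Om_{N,\beta}(t)\,dt$ converges (when $\beta<N$ this is $\int_r^1 (1-t)^{\beta-N}\,dt$ with $\beta-N>-1$; when $\beta=N$ it is $\int_r^1 \log\frac{e}{1-t}\,dt$). Integrating radially,
\[
|f_n^{(N-1)}(R\za)|\le |f_n^{(N-1)}(r\za)| + C\int_r^1 \Om_{N,\beta}(t)\,dt,
\]
so for any $\er>0$ one first fixes $r$ close to $1$ to make the integral at most $\er$, and then uses uniform convergence on the compact set $\{|z|\le r\}$ (which, as you note, follows from Cauchy's formula) to make $|f_n^{(N-1)}(r\za)|<\er$ for large $n$. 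This yields $f_n^{(N-1)}\rightrightarrows 0$ on all of $\Dbb$, and the lower derivatives then follow by integrating once more. Your Arzel\`a--Ascoli step on a fixed compact $K$ never touches the boundary and cannot produce this conclusion. (A side remark: in the case $\beta=N$ the smallest admissible $J$ in the norm is $N+1$, not $N$, so the bound $|f_n^{(N)}(z)|\le C\log\frac{e}{1-|z|}$ is not read off directly from the norm but comes from integrating the control on $f_n^{(N+1)}$; your discussion of that case is garbled, though this is secondary to the main gap.)
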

\begin{proof}
For $\za\in\partial\Dbb$ and $1>R>r>0$, we have
\[
|f_n^{(N-1)}(R\za)| \le |f_n^{(N-1)}(r\za)| + C \int_r^1 \Om_{\beta, N}(t)\,dt,
\]
where $\Om_{\beta, N}$ is defined by \eqref{e_Omjb} with $N = J$.
Let $\er>0$. Clearly, the above integral converges, so fix an
$r$ so close to $1$ that the integral is estimated by $\er/C$.
Since $f_n \to 0$ uniformly on compact subsets of $\Dbb$,
the derivative $f_n^{(N-1)}$ has the same property;
thus, $|f_n^{(N-1)}(r\za)|<\er$ for all sufficiently large $n$.
Therefore, $f_n^{(N-1)}\rightrightarrows 0$ on $\Dbb$, so
$f_n^{(j)}\rightrightarrows 0$ on $\Dbb$ for $j=0, 1, \dots N-1$.
\end{proof}

\begin{prop}\label{p_upper}
Let $\al, \beta \in\Rbb$, $J\in\zz$, $J>\al$ and
let $N\in \zz$ be the minimal number such that $N\ge \beta$.
Let the weight functions $\Om_{j, \beta}$, $j=0,1,\dots, J$, be those defined by \eqref{e_Omjb}.
Assume that the weighted composition operator $\cph^g: \lip^\beta(\Dbb) \to \lip^\al(\Dbb)$
is bounded.
Then
\begin{equation}\label{e_ess_up}
\|\cph^g\|_{e, \lip^\beta \to \lip^\al} \lesssim
\sum_{j=N}^J \limsup_{|\ph(z)|\to 1-}
|G_j[g,\ph,J](z)| (1-|z|)^{J-\al} {\Om_{j, \beta}(|\ph(z)|)}.
\end{equation}
In particular, $\cph^g: \lip^\beta(\Dbb) \to \lip^\al(\Dbb)$ is compact if $N>J$.
\end{prop}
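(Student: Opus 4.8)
The plan is to deduce Proposition~\ref{p_upper} from Lemma~\ref{l_upper} by estimating each essential norm $\|\cph^{G_j}\|_{e, \lip^{\beta-j}\to\lip^{\al-J}}$ separately. By Lemma~\ref{l_upper} it suffices to show, for each $j=0,1,\dots,J$,
\[
\|\cph^{G_j}\|_{e, \lip^{\beta-j}\to\lip^{\al-J}}
\lesssim \limsup_{|\ph(z)|\to 1-} |G_j[g,\ph,J](z)|(1-|z|)^{J-\al}\,\Om_{j,\beta}(|\ph(z)|),
\]
with the understanding that for $j=0,1,\dots,\min\{N-1,J\}$ one has $\Om_{j,\beta}\equiv 1$ and $\beta-j\ge 1$ (since $j\le N-1$ forces $\beta-j > 0$, indeed $\ge \beta-(N-1) > -1$ and, more carefully, $\beta-j\ge\beta-N+1$, which is $>0$ when $N>\beta$; when $N=\beta$ the index range $j\le N-1$ gives $\beta-j\ge 1$), so that this first batch of operators should be shown to be \emph{compact}, i.e.\ the corresponding $\limsup$ term is effectively $0$ and may be dropped — this is exactly why the sum in \eqref{e_ess_up} starts at $j=N$. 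Note also that if $N>J$ the sum is empty, so every $\cph^{G_j}$ is compact and hence so is $\cph^g$, giving the final sentence.

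For the compactness of $\cph^{G_j}: \lip^{\beta-j}(\Dbb)\to\lip^{\al-J}(\Dbb)$ when $j\le N-1$, I would invoke Lemma~\ref{l_cmp}: take a bounded sequence $\{f_n\}\subset\lip^{\beta-j}(\Dbb)$ with $f_n\rightrightarrows 0$ on compacta. Since $\beta-j$ has associated minimal integer $N-j\ge 1$, Lemma~\ref{l_uniform0} gives $f_n^{(i)}\rightrightarrows 0$ on all of $\Dbb$ for $i=0,\dots,N-j-1$, in particular $f_n\rightrightarrows 0$ on all of $\Dbb$. Using the formal Leibniz/Fa\`a di Bruno identity from Section~\ref{ss_der} to expand $(\cph^{G_j}f_n)^{(\widetilde J)}$ (with $\widetilde J$ the least integer exceeding $\al-J$) and the boundedness hypothesis (which via Proposition~\ref{p_bdd} controls each coefficient function), one checks that $\sup_{z\in\Dbb}|(\cph^{G_j}f_n)^{(\widetilde J)}(z)|(1-|z|)^{\widetilde J-(\al-J)}\to 0$; the point is that every surviving term contains a factor $f_n^{(i)}(\ph(z))$ with $i$ in the range where uniform convergence on $\Dbb$ holds. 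Hence $\|\cph^{G_j}f_n\|_{\lip^{\al-J}}\to 0$ and $\cph^{G_j}$ is compact.

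For $j=N,\dots,J$ (so $\Om_{j,\beta}$ is a genuine weight), I would use the standard ``truncation'' argument. Let $L:=\limsup_{|\ph(z)|\to 1-}|G_j(z)|(1-|z|)^{J-\al}\Om_{j,\beta}(|\ph(z)|)$. For $0<r<1$ let $K_r f(w)=f(rw)$, a compact operator on $\lip^{\beta-j}(\Dbb)$, and consider the compact operators $\cph^{G_j}K_r$; then $\|\cph^{G_j}\|_e\le\liminf_{r\to1-}\|\cph^{G_j}(I-K_r)\|$. To bound $\|\cph^{G_j}(I-K_r)f\|_{\lip^{\al-J}}$ for $\|f\|_{\lip^{\beta-j}}\le 1$, split the supremum defining the norm over $\{|\ph(z)|\le\rho\}$ and $\{|\ph(z)|>\rho\}$ for $\rho$ close to $1$. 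On $\{|\ph(z)|>\rho\}$ one uses the pointwise estimate $|h^{(i)}(\ph(z))|\lesssim\|h\|_{\lip^{\beta-j}}\,\Om_{i,\beta-j}(|\ph(z)|)$ together with the relations among the weights $\Om_{i,\beta-j}$ and $\Om_{j',\beta}$ coming from \eqref{e_Omjb}, to get a bound $\lesssim\sup_{|\ph(z)|>\rho}|G_j(z)|(1-|z|)^{J-\al}\Om_{j,\beta}(|\ph(z)|)$, which tends to $L$ as $\rho\to1-$; on $\{|\ph(z)|\le\rho\}$, the factor $(I-K_r)f$ and its derivatives tend to $0$ uniformly on the compact set $\{|w|\le\rho\}$ as $r\to1-$ while the coefficient functions stay bounded there, so this part is $o(1)$. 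Letting first $r\to1-$ and then $\rho\to1-$ yields $\|\cph^{G_j}\|_e\lesssim L$, as desired.

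The main obstacle is the bookkeeping in the second part: matching the derivative-weight factors $\Om_{i,\beta-j}(|\ph(z)|)$ produced by the pointwise Lipschitz estimates on $f$ against the prescribed weights $\Om_{j,\beta}(|\ph(z)|)$ in \eqref{e_ess_up}, across the several regimes ($\beta$ integer vs.\ not, with or without the logarithmic weight at $\beta=N$), and verifying that the expansion of $(\cph^{G_j}f)^{(\widetilde J)}$ via Fa\`a di Bruno only involves coefficient functions already controlled by the boundedness hypothesis through Proposition~\ref{p_bdd}. I expect this to reduce to a finite case check together with the elementary identity $\Om_{i,\gamma}(t)\,\Om_{j-i,\delta}(t)\asymp\Om_{j,\gamma+\delta-(\text{suitable shift})}(t)$ in each regime, but it is where the real work lies.
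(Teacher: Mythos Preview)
Your overall strategy is correct and coincides with the paper's: reduce via Lemma~\ref{l_upper} to the operators $\cph^{G_j}:\lip^{\beta-j}\to\lip^{\al-J}$, show these are compact for $j\le N-1$ using Lemmas~\ref{l_cmp} and~\ref{l_uniform0}, and for $j\ge N$ use the truncation $C^{G_j}_{r\ph}=\cph^{G_j}K_r$ together with the split over $\{|\ph(z)|\le\delta\}$ and $\{|\ph(z)|>\delta\}$.

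However, you are missing one observation that makes your anticipated ``main obstacle'' disappear entirely. Since $J>\al$, the target space $\lip^{\al-J}(\Dbb)$ is a \emph{growth space}: the smallest nonnegative integer exceeding $\al-J$ is $\widetilde J=0$, so
\[
\|h\|_{\lip^{\al-J}}=\sup_{z\in\Dbb}|h(z)|(1-|z|)^{J-\al}.
\]
Consequently there is no Fa\`a di Bruno expansion of $(\cph^{G_j}f)^{(\widetilde J)}$ to perform, no coefficient functions $G_i[G_j,\ph,\widetilde J]$ to control, and no multi-regime bookkeeping among the weights. For $j\le N-1$ you only need $f_n\rightrightarrows 0$ on $\Dbb$ (the $i=0$ case of Lemma~\ref{l_uniform0}) together with the bound $\sup_z(1-|z|)^{J-\al}|G_j(z)|<\infty$, which follows by applying the bounded operator $\cph^{G_j}$ to the constant function $1$. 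For $j\ge N$ the only pointwise estimate needed is $|f(\ph(z))|\lesssim\Om_{0,\beta-j}(|\ph(z)|)=\Om_{j,\beta}(|\ph(z)|)$ for $\|f\|_{\lip^{\beta-j}}\le 1$; the identity $\Om_{0,\beta-j}=\Om_{j,\beta}$ is immediate from~\eqref{e_Omjb}. With this in hand your proof is complete and matches the paper's.
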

\begin{proof}
We will apply Lemma~\ref{l_upper}.
So, fix $j\in \{N, N+1, \dots, J\}$ and $\delta\in (0,1)$.
Let $G_j = G_j[g,\ph,J]$. By Lemma~\ref{l_upper},
the operator $\cph^{G_j}: \lip^{\beta-j}(\Dbb) \to \lip^{\al-J}(\Dbb)$ is bounded.
Now, let $\{r_m\}_{m=1}^\infty \subset (0,1)$ be an increasing sequence converging to $1$.
For $m\in \Nbb$, the operator $C_{r_m\ph}^{G_j}: \lip^{\beta-j}(\Dbb) \to \lip^{\al-J}(\Dbb)$ is compact, hence,
\begin{equation}\label{e_upper_u}
\begin{aligned}
  \|\cph^{G_j} &\|_{e, \lip^{\beta-j} \to \lip^{\al-J}}
\le \|\cph^{G_j} - C_{r_m\ph}^{G_j}\|_{\lip^{\beta-j} \to \lip^{\al-J}} \\
&\le \sup_{|\ph(z)|\le \delta} \sup_{\|f\|_{\lip^{\beta-j}}\le 1}
(1-|z|)^{J-\al} |G_j(z)||f(\ph(z)) - f(r_m\ph(z))|\\
&\ + \sup_{|\ph(z)|> \delta} \sup_{\|f\|_{\lip^{\beta-j}}\le 1}
(1-|z|)^{J-\al} |G_j(z)||f(\ph(z)) - f(r_m\ph(z))|.
\end{aligned}
\end{equation}
The family $\{f: \|f\|_{\lip^{\beta-j}}\le 1\}$ is uniformly bounded on compact subsets of $\Dbb$,
hence $f(\ph(z)) - f(r_m\ph(z)) \rightrightarrows 0$ as $m\to \infty$ if $|\ph(z)|\le \delta$
and $\|f\|_{\lip^{\beta-j}}\le 1$.
Also,
\begin{equation}\label{e_Gj}
\sup_{z\in\Dbb} (1-|z|)^{J-\al} |G_j(z)|<\infty,
\end{equation}
since $\cph^{G_j}: \lip^{\beta-j}(\Dbb) \to \lip^{\al-J}(\Dbb)$ is bounded.
So, in \eqref{e_upper_u}, the supremum over the set $\{|\ph(z)|\le \delta\}$ tends to zero as $m\to\infty$.

For $f\in\lip^{(\beta-j)}(\Dbb)$, $\|f\|_{\lip^{\beta-j}}\le 1$, we have
\[
|f(\ph(z))| \lesssim \Om_{0, \beta-j}(|\ph(z)|) = \Om_{j, \beta}(|\ph(z)|), \quad z\in\Dbb,
\]
by Proposition~3.1 from \cite{Du11}. Therefore, Lemma~\ref{l_upper} and \eqref{e_upper_u}
guarantee that
\begin{align*}
  \|\cph^g\|_{e, \lip^\beta \to \lip^\al}
&\lesssim \sum_{j=N}^J \limsup_{|\ph(z)|\to 1-}
|G_j[g,\ph,J](z)| (1-|z|)^{J-\al} {\Om_{j, \beta}(|\ph(z)|)}\\
&\ + \sum_{j=0}^{N-1} \|\cph^{G_j}\|_{e, \lip^{\beta-j} \to \lip^{\al-J}}.
\end{align*}
We claim that the latter sum is equal to zero.
Indeed, for $j\in\{0,1,\dots, N-1\}$, let $\{f_n\}_{n=1}^\infty$
be a bounded sequence in $\lip^{\beta-j}(\Dbb)$ such that $f_n\to 0$
uniformly on compact subsets of $\Dbb$.

Since $\beta-j >0$, $f_n\rightrightarrows 0$ as $n\to \infty$ by Lemma~\ref{l_uniform0}.
Hence, \eqref{e_Gj} guarantees that
$\|\cph^{G_j} f_n\|_{\lip^{\al-J}} \to 0$ as $n\to \infty$.
Therefore,  $\cph^{G_j}: \lip^{\beta-j}(\Dbb) \to \lip^{\al-J}(\Dbb)$ is a compact operator by Lemma~\ref{l_cmp}.
So, the proof of \eqref{e_ess_up} is finished.
\end{proof}

\subsection{Lower estimates}

\begin{lem}\label{l_test}
Let $j\in\zz$, $\beta\in\Rbb$ and let $N\in\zz$ be the smallest number
such that $\beta\le N$. Assume that $N\le J$.
Then there exist $f_{w, j} \in \lip^\beta(\Dbb)$, $w\in\Dbb$, $j= N, \dots, J$, such that
$\|f_{w, j}\|_{\lip^\beta}\le 1$ and
\begin{align*}
 f_{w, j}^{(j)}(w)
&\ge C \Om_{j, \beta}(|w|),\ j=N,\dots, J,\ C=C(j, \beta)>0; \\
 \text{if}\ j\ge 1,\ \text{then}\ f_{w, j}^{(k)}(w)
&=0\ \text{for} \ k=0,\dots, j-1; \\
 \text{if} \ j<J,\ \text{then} \  |f_{w, j}^{(m)}(w)|
&\le C \Om_{m, \beta}(|w|),\ m=j+1,\dots, J,\ C=C(m, \beta)>0; \\
 f_{w,j}(z)
&\to 0\ \textrm{uniformly on compact subsets of}\ \Dbb\ \text{as}\ |w|\to 1.
\end{align*}
\end{lem}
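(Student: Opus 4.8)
The plan is to build, for each fixed $j\in\{N,\dots,J\}$, the family $\{f_{w,j}\}_{w\in\Dbb}$ from an explicit formula and to verify the five requirements by direct differentiation, systematically using the elementary estimates
\[
|z-w|\le|1-\wb z|,\qquad 1-|z|\le|1-\wb z|,\qquad 1-|w|^2\le 2|1-\wb z|,\qquad z,w\in\Dbb .
\]
By \eqref{e_Omjb} two cases occur. In the \emph{power case} one has $j>\beta$ and $\Om_{j,\beta}(t)=(1-t)^{\beta-j}$ with $\beta-j<0$; in the \emph{logarithmic case} one has $j=N=\beta$ (so $\beta$ is a non\-negative integer, as $j\in\zz$) and $\Om_{j,\beta}(t)=\log\frac{e}{1-t}$.

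In the power case I fix an integer $\sigma$ with $\beta+\sigma>0$ and put
\[
f_{w,j}(z)=c\,(1-|w|^2)^{\beta+\sigma}\,\frac{(z-w)^j}{(1-\wb z)^{\,j+\sigma}},
\]
with a small constant $c=c(j,\beta,\sigma)>0$ fixed at the end. The factor $(z-w)^j$ forces vanishing to order $j$ at $w$, so $f_{w,j}^{(k)}(w)=0$ for $k<j$, and only one Leibniz term survives at $z=w$: $f_{w,j}^{(m)}(w)=c\binom mj j!\,(1-|w|^2)^{\beta+\sigma}\bigl[(1-\wb z)^{-(j+\sigma)}\bigr]^{(m-j)}(w)$ for $m\ge j$. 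Since $\bigl|\bigl[(1-\wb z)^{-(j+\sigma)}\bigr]^{(i)}(w)\bigr|\lesssim(1-|w|^2)^{-(j+\sigma+i)}$, this yields $f_{w,j}^{(j)}(w)=c\,j!\,(1-|w|^2)^{\beta-j}\gtrsim\Om_{j,\beta}(|w|)$ and $|f_{w,j}^{(m)}(w)|\lesssim(1-|w|^2)^{\beta-m}\asymp\Om_{m,\beta}(|w|)$ for $j<m\le J$. For the norm I use the parameter $J$ (admissible, since $J\ge j>\beta$): Leibniz together with $|z-w|\le|1-\wb z|$ gives $|f_{w,j}^{(J)}(z)|\lesssim(1-|w|^2)^{\beta+\sigma}|1-\wb z|^{-(\sigma+J)}$, so bounding $(1-|w|^2)^{\beta+\sigma}$ and $(1-|z|)^{J-\beta}$ by the matching powers of $|1-\wb z|$ (both exponents being positive) gives $\sup_z|f_{w,j}^{(J)}(z)|(1-|z|)^{J-\beta}\lesssim1$; the lower\-order Taylor coefficients at $0$ are $O\bigl((1-|w|^2)^{\beta+\sigma}\bigr)=O(1)$. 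Finally $(1-|w|^2)^{\beta+\sigma}\to0$ while the remaining factor stays bounded on compact subsets of $\Dbb$, so $f_{w,j}\to0$ locally uniformly; choosing $c$ small makes $\|f_{w,j}\|_{\lip^\beta}\le1$.

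In the logarithmic case the naive choice $(z-w)^j\log\frac{2}{1-\wb z}$ has the right derivatives at $w$ but does not tend to $0$ on compacta, whereas inserting a decaying factor such as $(1-|w|^2)^j\bigl(\frac{z-w}{1-\wb z}\bigr)^j$ spoils the higher\-derivative bounds by a logarithm. I instead take
\[
f_{w,j}(z)=\frac{c}{\ell(w)}\,(z-w)^j\,L(z)^2,\qquad L(z)=\log\frac{2}{1-\wb z},\quad \ell(w)=\log\frac{2}{1-|w|^2}.
\]
Again $f_{w,j}$ vanishes to order $j$ at $w$, and only the term with all $j$ derivatives on $(z-w)^j$ survives at $w$, so $f_{w,j}^{(m)}(w)=\frac{c}{\ell(w)}\binom mj j!\,(L^2)^{(m-j)}(w)$ for $m\ge j$. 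Since $(L^2)^{(0)}(w)=\ell(w)^2$ we get $f_{w,j}^{(j)}(w)=c\,j!\,\ell(w)\gtrsim\log\frac{e}{1-|w|}$, and since $|(L^2)^{(i)}(w)|\lesssim\ell(w)(1-|w|^2)^{-i}$ for $i\ge1$ the logarithm cancels, giving $|f_{w,j}^{(m)}(w)|\lesssim(1-|w|^2)^{j-m}\asymp\Om_{m,\beta}(|w|)$ for $j<m\le J$. For the norm I use the parameter $j+1>\beta$: from $|L^{(b)}(z)|\lesssim|1-\wb z|^{-b}$ ($b\ge1$) and $|L(z)|\lesssim\log\frac{e}{|1-\wb z|}$ one obtains $|(L^2)^{(k)}(z)|\lesssim|1-\wb z|^{-k}\log\frac{e}{|1-\wb z|}$ for $k\ge1$, hence, using $|z-w|\le|1-\wb z|$, $|f_{w,j}^{(j+1)}(z)|\lesssim\frac{1}{\ell(w)}|1-\wb z|^{-1}\log\frac{e}{|1-\wb z|}$; multiplying by $(1-|z|)\le|1-\wb z|$ and using $|1-\wb z|\ge1-|w|$ gives $|f_{w,j}^{(j+1)}(z)|(1-|z|)\lesssim\frac{1}{\ell(w)}\log\frac{e}{1-|w|}\asymp1$, while the Taylor coefficients at $0$ are $O(1/\ell(w))=O(1)$; and $f_{w,j}\to0$ locally uniformly since $\ell(w)\to\infty$. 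Rescaling gives $\|f_{w,j}\|_{\lip^\beta}\le1$.

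The only genuinely delicate point is the logarithmic case: the demands of a large (logarithmic) $j$-th derivative at $w$, of higher derivatives at $w$ \emph{free} of the extra logarithm, and of uniform decay on compacta pull against one another, and the chosen function works because the normalisation $1/\ell(w)$ simultaneously produces the decay and cancels the logarithm generated when all derivatives fall on the vanishing factor $(z-w)^j$; the inequality $|z-w|\le|1-\wb z|$ is what keeps the $\lip^\beta$-norm bounded in both cases.
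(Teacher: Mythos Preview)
Your construction is correct and follows essentially the same route as the paper: the paper also takes $f_{w,j}(z)=\dfrac{(1-|w|)(z-w)^j}{(1-z\wb)^{j-\beta+1}}$ in the power case and $f_{w,N}(z)=\dfrac{(z-w)^N}{\log\frac{e}{1-|w|}}\Bigl(\log\frac{e}{1-z\wb}\Bigr)^{2}$ in the logarithmic case $\beta=N$, and simply says the required properties follow from ``direct computations and the estimate $|z-w|\le|1-z\wb|$''. Your versions differ only cosmetically (a free exponent~$\sigma$ in place of the specific choice $\sigma=1-\beta$, and $\log\frac{2}{1-\wb z}$ in place of $\log\frac{e}{1-\wb z}$), and you supply the detailed verification that the paper omits.
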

\begin{proof}
For $w\in\Dbb$, define the functions $f_{w, j}$ as follows:
\begin{align*}
\text{if}\ \beta<N,\ \text{then}\ f_{w, N}(z)
&= \frac{(1-|w|)(z-w)^N}{(1-z\wb)^{N-\beta+1}}, \quad z\in\Dbb; \\
 \text{if} \ \beta=N,\ \text{then} \  f_{w, N}(z)
&= \frac{\left(\log\frac{e}{1-z\wb}\right)^2
(z-w)^N}{\log\frac{e}{1-|w|}}, \quad z\in\Dbb; \\
\text{if}\ N<J,\ \text{then}\ f_{w, j}(z)
&= \frac{(1-|w|)(z-w)^j}{(1-z\wb)^{j-\beta+1}}, \quad
z\in\Dbb,\ j=N+1,\dots, J.
\end{align*}
To show that the above functions have the required properties, it suffices
to use direct computations and the estimate $|z-w|\le |1-z\wb|$, $z, w\in\Dbb$.
In fact, we have $\|f_{w,j}\|_{\lip^\beta} \le C(j, \beta)$, so
we obtain functions in the unit ball of $\lip^\beta(\Dbb)$ after appropriate normalization.

Also, the following observation is useful in the only nonstandard case $\beta= N = j$:
to prove the upper estimates for $|f_{w, j}^{(m)}(w)|$ with $m=N+1, \dots, J$,
it suffices to show that $\|f_{w, N}\|_{\lip^N} \le C$, that is,
\[
\left|f_{w, N}^{(N+1)} \right| \le \frac{C}{1-|z|} = C \Om_{N, N+1}(|z|), \quad z\in \Dbb.
\]
Indeed, the above inequality implies the required upper estimates for $m=N+2, \dots, J$.
\end{proof}

\begin{prop}\label{p_ess}
Let $\al, \beta \in\Rbb$, $J\in\zz$, $J>\al$ and
let $N\in \zz$ be the minimal number such that $N\ge \beta$.
Let the weight functions $\Om_{j, \beta}$, $j=0,1,\dots, J$, be those defined by \eqref{e_Omjb}.
Assume that the weighted composition operator $\cph^g: \lip^\beta(\Dbb) \to \lip^\al(\Dbb)$
is bounded.
Then
\begin{equation}\label{e_ess_low}
\aligned
\|\cph^g &\|_{e, \lip^\beta \to \lip^\al} \\
&\gtrsim
\max_{N\le j \le J} \limsup_{|\ph(z)|\to 1-}
|G_j[g,\ph,J](z)| (1-|z|)^{J-\al} {\Om_{j, \beta}(|\ph(z)|)}.
\endaligned
\end{equation}
\end{prop}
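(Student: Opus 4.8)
The plan is to reduce the lower estimate to a single-index, unweighted-composition statement exactly as the upper estimate was reduced, and then feed in the test functions of Lemma~\ref{l_test}. First I would fix $j\in\{N,N+1,\dots,J\}$ and a sequence $\{z_m\}\subset\Dbb$ with $|\ph(z_m)|\to 1-$ realizing the $\limsup$ in \eqref{e_ess_low} for that $j$; write $w_m=\ph(z_m)$. The idea is to test $\cph^g$ against the functions $f_{w_m,j}\in\lip^\beta(\Dbb)$ supplied by Lemma~\ref{l_test}: they have norm $\le 1$ and tend to $0$ uniformly on compact sets as $|w_m|\to1$, so for any compact operator $Q:\lip^\beta(\Dbb)\to\lip^\al(\Dbb)$ one has $\|Q f_{w_m,j}\|_{\lip^\al}\to 0$ (by Lemma~\ref{l_cmp} applied to $Q$, or directly since $Q$ maps the weak-star null, bounded sequence $\{f_{w_m,j}\}$ to a norm-null sequence). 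Hence
\[
\|\cph^g\|_{e,\lip^\beta\to\lip^\al}\gtrsim\limsup_{m\to\infty}\|\cph^g f_{w_m,j}\|_{\lip^\al}
\ge\limsup_{m\to\infty}\bigl(1-|z_m|\bigr)^{J-\al}\bigl|(\cph^g f_{w_m,j})^{(J)}(z_m)\bigr|,
\]
where I have bounded the $\lip^\al$-norm below by the single point $z=z_m$ in the supremum defining $\|\cdot\|_{\lip^{\al,J}}$ (the finitely many Taylor-coefficient terms are irrelevant in the $\limsup$).

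Next I would expand $(\cph^g f_{w_m,j})^{(J)}(z_m)=\sum_{k=0}^{J}G_k[g,\ph,J](z_m)\,f_{w_m,j}^{(k)}(\ph(z_m))$ using the Fa\`a di Bruno/Leibniz identity from Section~\ref{ss_der}. By the second and third bullets of Lemma~\ref{l_test}, $f_{w_m,j}^{(k)}(w_m)=0$ for $k<j$ and $|f_{w_m,j}^{(k)}(w_m)|\le C\,\Om_{k,\beta}(|w_m|)$ for $k>j$, while the main term is $f_{w_m,j}^{(j)}(w_m)\ge c\,\Om_{j,\beta}(|w_m|)$. Therefore
\[
\bigl|(\cph^g f_{w_m,j})^{(J)}(z_m)\bigr|
\ge c\,|G_j(z_m)|\,\Om_{j,\beta}(|w_m|)
-\sum_{k=j+1}^{J} C\,|G_k(z_m)|\,\Om_{k,\beta}(|w_m|).
\]
Multiplying by $(1-|z_m|)^{J-\al}$, the leading term converges (along a subsequence) to $c$ times the quantity in \eqref{e_ess_low} for index $j$, so it remains to show the correction terms contribute nothing in the $\limsup$.

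The main obstacle is exactly this last point: controlling the "higher-order" terms $(1-|z_m|)^{J-\al}|G_k(z_m)|\,\Om_{k,\beta}(|w_m|)$ for $k>j$. Here I would use that the operator is bounded, so by Proposition~\ref{p_bdd} each quantity $(1-|z|)^{J-\al}|G_k(z)|\,\Om_{k,\beta}(|\ph(z)|)$ is bounded on $\Dbb$; that alone is not enough, but combined with a perturbation-of-$j$ argument it is. Concretely, I would run the argument not for the single maximizing $j$ but prove the estimate for the \emph{largest} relevant index first: for $j=J$ there are no correction terms at all, so \eqref{e_ess_low} for $j=J$ follows immediately. Then I would descend: having \eqref{e_ess_low} for all indices $k>j$, subtract from $\cph^g$ a suitable finite-rank (hence compact) modification — or, more cleanly, note that by boundedness $(1-|z|)^{J-\al}|G_k(z)|\Om_{k,\beta}(|\ph(z)|)\lesssim \|\cph^g\|$, and for $k>j$ the weight ratio $\Om_{k,\beta}(t)/\Om_{j,\beta}(t)=(1-t)^{j-k}\to\infty$ as $t\to1$ in the generic case, which forces $(1-|z_m|)^{J-\al}|G_k(z_m)|\Om_{j,\beta}(|w_m|)\to 0$ along the maximizing sequence. (The borderline logarithmic cases $\beta=N$ are handled by the extra estimate recorded at the end of Lemma~\ref{l_test}.) Taking the maximum over $j\in\{N,\dots,J\}$ then gives \eqref{e_ess_low}, completing the proof together with the matching upper bound of Proposition~\ref{p_upper}.
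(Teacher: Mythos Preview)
Your overall strategy matches the paper's: use the test functions of Lemma~\ref{l_test}, start the induction at $j=J$ where no correction terms appear, then descend to $j=N$. The base case and the expansion via $\sum_k G_k f^{(k)}(\ph)$ are exactly right.

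The gap is in the induction step. You write that ``it remains to show the correction terms contribute nothing in the $\limsup$,'' and your route~(b) argues via the weight ratio that
\[
(1-|z_m|)^{J-\al}\,|G_k(z_m)|\,\Om_{j,\beta}(|w_m|)\to 0.
\]
But this is not the correction term: the correction term carries $\Om_{k,\beta}(|w_m|)$, not $\Om_{j,\beta}(|w_m|)$, and since $\Om_{k,\beta}/\Om_{j,\beta}\to\infty$ for $k>j$, your conclusion says nothing about it. In fact the correction terms need \emph{not} vanish; there is no reason why, along a sequence maximizing the index-$j$ quantity, the index-$k$ quantities should be small. Your route~(a), ``subtract a finite-rank compact modification,'' is not developed, and it is unclear what operator you have in mind that would kill the $G_k$ contributions for $k>j$ without disturbing $G_j$.

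What the paper does instead is simpler: it does not try to make the correction terms vanish. By the induction hypothesis (namely \eqref{e_ess_low} already established for each $k>j$), each correction term satisfies
\[
\limsup_{m\to\infty}(1-|z_m|)^{J-\al}\,|G_k(z_m)|\,\Om_{k,\beta}(|w_m|)\;\le\;C_k\,\|\cph^g\|_{e,\lip^\beta\to\lip^\al}.
\]
Summing over $k=j+1,\dots,J$ and moving these terms to the left-hand side yields
\[
(1+C)\,\|\cph^g\|_{e,\lip^\beta\to\lip^\al}\;\gtrsim\;\limsup_{|\ph(z)|\to1-}(1-|z|)^{J-\al}\,|G_j(z)|\,\Om_{j,\beta}(|\ph(z)|),
\]
which is exactly the induction step. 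So the fix is: use the induction hypothesis to \emph{absorb} the correction terms into the essential norm on the left, rather than trying to show they disappear.
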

\begin{proof}
Clearly, we may assume that $J\ge N$.
We start with $j=J$ in the right-hand side of \eqref{e_ess_low}; for smaller parameters $j$
we will proceed by induction.

So, let $\{z_n\}\subset\Dbb$ be a sequence such that $|\ph(z_n)|\to 1-$ as $n\to \infty$.
Let the functions $f_{n, J} = f_{\ph(z_n), J}$, $\|f_{n,J}\|_{\lip^\beta}\le 1$, be those provided by Lemma~\ref{l_test}.
Given a compact operator $Q: \lip^\beta(\Dbb) \to \lip^\al(\Dbb)$, we have
$\|Q f_{n, J}\|_{\lip^\al} \to 0$ as $n\to \infty$; therefore,
using the property $f_{n, J}^{(k)}(\ph(z_n))=0$, $k=0,1,\dots, J-1$, we obtain
\begin{align*}
  \|\cph^g -Q\|_{\lip^\beta\to \lip^\al}
  &\ge \limsup_{n\to \infty}\|\cph^g f_{n, J}\|_{\lip^\al} - \limsup_{n\to\infty} \|\cph^g f_{n, J}\|_{\lip^\al}\\
    &\ge \limsup_{n\to \infty} |G_J[g,\ph,J](z_n)|  |f_{n, J}^{(J)}(\ph(z_n))| (1-|z_n|)^{J-\al} \\
    &\ge C\limsup_{n\to \infty} |G_J[g,\ph,J](z_n)|  \Om_{J, \beta}(|\ph(z_n)|) (1-|z_n|)^{J-\al}.
\end{align*}
Since $Q$ is an arbitrary compact operator and $\{z_n\}_{n=1}^\infty \subset \Dbb$ is an arbitrary sequence
such that $|\ph(z_n)|\to 1-$ as $n\to\infty$, we conclude that
\[
\|\cph^g\|_{e, \lip^\beta \to \lip^\al} \gtrsim
\limsup_{|\ph(z)|\to 1-} |G_j[g,\ph,J](z)|  {\Om_{J, \beta}(|\ph(z)|)} (1-|z|)^{J-\al}.
\]
In other words, the required lower estimate holds for $j=J$; in particular, the proof is finished
if $J=N$.

If $J>N$, then we argue by induction. Namely, let $J-1 \ge j \ge N$.
Assume, as the induction hypothesis, that
\begin{equation}\label{e_ind_j1}
\aligned
C_{j+1}
&\|\cph^g\|_{e, \lip^\beta \to \lip^\al} \\
&\ge
\sum_{m=j+1}^{J} \limsup_{|\ph(z)|\to 1-}
|G_m[g,\ph,J](z)| (1-|z|)^{J-\al} {\Om_{m, \beta}(|\ph(z)|)}.
\endaligned
\end{equation}
Let $\{z_n\}\subset\Dbb$ be a sequence such that $|\ph(z_n)|\to 1-$ as $n\to \infty$.
Applying Lemma~\ref{l_test}, we obtain test functions $f_{n, j} = f_{\ph(z_n), j}$.
In particular, we have
\[
|f^{(m)}_{n, j}(z)| \lesssim \Om_{m, \beta}(|z|),\quad z\in \Dbb,\ m= j+1, \dots, J.
\]
So, without loss of generality, we assume that
\begin{equation}\label{e_konst_j1}
|f^{(m)}_{n, j}(z)| \leq k_{j+1} \Om_{m, \beta}(|z|), \quad z\in \Dbb,
\end{equation}
for a constant $k_{j+1}>0$ and for $m= j+1, \dots, J$.

Now, for a compact operator $Q: \lip^\beta(\Dbb) \to \lip^\al(\Dbb)$, we have
\begin{align*}
  \|\cph^g -Q
  &\|_{\lip^\beta\to \lip^\al}
  \ge \limsup_{n\to \infty}\|\cph^g f_{n, j}\|_{\lip^\al} - \limsup_{n\to\infty} \|\cph^g f_{n, j}\|_{\lip^\al}\\
    &\ge \limsup_{n\to \infty} |G_j[g,\ph,J](z_n)|  |f_{n, j}^{(j)} (\ph(z_n))| (1-|z_n|)^{J-\al} \\
    &\ -\sum_{m=j+1}^J \limsup_{n\to \infty} |G_m[g,\ph,J](z_n)| |f_{n, j}^{(m)} (\ph(z_n))| (1-|z_n|)^{J-\al} \\
  &\ge C \limsup_{n\to \infty} |G_j[g,\ph,J](z_n)|  \Om_{j, \beta}(|\ph(z_n)|) (1-|z_n|)^{J-\al} \\
    &\ -k_{j+1}\sum_{m=j+1}^J \limsup_{n\to \infty} |G_m[g,\ph,J](z_n)|
       {\Om_{m, \beta}(|\ph(z_n)|)} (1-|z_n|)^{J-\al}
\end{align*}
by \eqref{e_konst_j1} and Lemma~\ref{l_test}.

Since $\{z_n\}_{n=1}^\infty \subset \Dbb$ is an arbitrary sequence
such that $|\ph(z_n)|\to 1-$ as $n\to\infty$, we obtain
\begin{align*}
\|\cph^g\|_{e, \lip^\beta \to \lip^\al}
&\ge \limsup_{|\ph(z)|\to 1-} |G_j[g,\ph,J](z)|  {\Om_{j, \beta}(|\ph(z)|)} (1-|z|)^{J-\al} \\
- &k_{j+1} \sum_{m=j+1}^J \limsup_{|\ph(z)|\to 1-} |G_m[g,\ph,J](z)|
       {\Om_{m, \beta}(|\ph(z)|)} (1-|z|)^{J-\al}.
\end{align*}
Therefore, the induction hypothesis \eqref{e_ind_j1} guarantees that
\begin{align*}
  (C_{k+1} k_{j+1} + 1)\|\cph^g\|_{e, \lip^\beta \to \lip^\al}
&\ge  \\
 C \limsup_{|\ph(z)|\to 1-}
&|G_j[g,\ph,J](z)|  {\Om_{j, \beta}(|\ph(z)|)} (1-|z|)^{J-\al}.
\end{align*}
In other words, the induction hypothesis \eqref{e_ind_j1} holds with $j$ in the place of $j+1$.
So, the induction construction proceeds and we obtain the required lower estimate for all $j$, $N\le j \le J$.
\end{proof}

\subsection{Main result}

We will use the following auxiliary result:
\begin{prop}[see {\cite[Theorem~2.4]{HKLRS}}]\label{p_zn_ess}
Let $\om, \nu$ be weights on $[0,1)$, $g\in \hol(\Dbb)$ and let $\ph: \Dbb\to \Dbb$ be a holomorphic mapping.
Then
\[
\limsup_{|\ph(z)|\to 1}\frac{|g(z)|\widetilde{\om}(|\ph(z)|)}{\nu(|z|)} =
\limsup_{n\to\infty}
\frac{\|g\ph^n\|_{H^\infty_\nu}}{\|z^n\|_{H^\infty_\om}}.
\]
\end{prop}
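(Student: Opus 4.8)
Write $L$ and $R$ for the left- and the right-hand side of the asserted identity, and set $\mu_n=\|z^n\|_{H^\infty_\om}=\sup_{0\le r<1}r^n/\om(r)$ and $b_n=\|g\ph^n\|_{H^\infty_\nu}$. The plan is to prove the two inequalities $R\le L$ and $L\le R$ separately. One may assume $g\in H^\infty_\nu$ (which is automatic in the intended applications, where $\nu$ is a power weight and $g$ a derivative-type function lying in the corresponding growth space) and $\sup_{z\in\Dbb}|\ph(z)|=1$; indeed, if $\sup_z|\ph(z)|<1$ then $b_n\le(\sup_z|\ph(z)|)^n\|g\|_{H^\infty_\nu}$ together with the trivial bound $\mu_n\ge(\delta')^n/\om(\delta')$ (any fixed $\delta'\in(0,1)$) forces $b_n/\mu_n\to0$, and $L=0$. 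Two elementary facts will be used repeatedly: since $z^n/\mu_n$ lies in the unit ball of $H^\infty_\om$, the definition of the associated weight gives $t^n/\mu_n\le\widetilde{\om}(t)$ for all $t\in[0,1)$ and $n\in\zz$; and, $\om$ being unbounded, $\mu_n\to0$, whence $\widetilde{\om}(t)\to\infty$ as $t\to1^-$.

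To obtain $R\le L$, fix $\er>0$ and, using the definition of $L$, choose $\delta\in(0,1)$ with $|g(z)|\,\widetilde{\om}(|\ph(z)|)\le(L+\er)\,\nu(|z|)$ whenever $|\ph(z)|>\delta$. Splitting the supremum that defines $b_n$ according to whether $|\ph(z)|\le\delta$ or $|\ph(z)|>\delta$, and using $|\ph(z)|^n/\widetilde{\om}(|\ph(z)|)\le\mu_n$ on the second set, one gets
\[
b_n\le\max\bigl\{(L+\er)\mu_n,\ \|g\|_{H^\infty_\nu}\,\delta^n\bigr\}.
\]
Dividing by $\mu_n$ and using $\delta^n/\mu_n\le\om(\delta')(\delta/\delta')^n\to0$ for $\delta'\in(\delta,1)$, we get $\limsup_n b_n/\mu_n\le L+\er$; letting $\er\to0$ gives $R\le L$.

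For $L\le R$ the idea is to replace, up to a relative error tending to $0$, the extremal functions implicit in $\widetilde{\om}$ by single monomials $z^n/\mu_n$. Concretely, I would invoke the following boundary sharpening of the classical comparison between $\widetilde{\om}$ and $\widehat{\om}(t):=\sup_{n\in\zz}t^n/\mu_n$ for radial weights (cf.~\cite{BDL99, Mo2k}): one has $\widehat{\om}\le\widetilde{\om}$ on $[0,1)$, and moreover
\[
\lim_{t\to1^-}\frac{\widehat{\om}(t)}{\widetilde{\om}(t)}=1.
\]
The inequality here is the one already used; it is the convergence of the ratio to $1$, rather than to a mere constant, that yields the \emph{equality} (as opposed to a two-sided estimate) in the proposition. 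The limit reflects the fact that, near the boundary, the best single monomial $z^n/\mu_n$ is asymptotically as efficient as any admissible function: quantitatively, the identity $\log\mu_n=\sup_{0\le r<1}(n\log r-\log\om(r))$ exhibits $\log\widehat{\om}$ as a discretized Legendre-type transform of $\log\om$ whose optimal exponent runs off to $+\infty$ as $t\to1^-$, so the discretization loss is $o(1)$, hence negligible against $\log\widetilde{\om}(t)\to\infty$. (For the weights $\om=\Om_{j,\beta}$ of this paper the limit may alternatively be verified by a direct computation from the values of $\|z^n\|_{H^\infty_{\Om_{j,\beta}}}$ recorded in Section~\ref{s_bdd}.)

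Granting this, choose $z_k\in\Dbb$ with $t_k:=|\ph(z_k)|\to1^-$ and $|g(z_k)|\,\widetilde{\om}(t_k)/\nu(|z_k|)\to L$, and, for each $k$, an exponent $n_k\in\zz$ with $t_k^{n_k}/\mu_{n_k}\ge(1-\eta_k)\,\widetilde{\om}(t_k)$, where $\eta_k\to0$. Evaluating the supremum defining $b_{n_k}$ at $z=z_k$,
\[
\frac{b_{n_k}}{\mu_{n_k}}\ \ge\ \frac{|g(z_k)|\,|\ph(z_k)|^{n_k}}{\nu(|z_k|)\,\mu_{n_k}}\ =\ \frac{|g(z_k)|}{\nu(|z_k|)}\cdot\frac{t_k^{n_k}}{\mu_{n_k}}\ \ge\ (1-\eta_k)\,\frac{|g(z_k)|\,\widetilde{\om}(t_k)}{\nu(|z_k|)}\ \longrightarrow\ L.
\]
Moreover $n_k\to\infty$: were some exponent repeated infinitely often along $(n_k)$, then $t_k^{n_k}/\mu_{n_k}$ would stay bounded there, contradicting $t_k^{n_k}/\mu_{n_k}\ge(1-\eta_k)\widetilde{\om}(t_k)\to\infty$. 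Passing to a subsequence along which $n_k\to\infty$, we conclude $R=\limsup_{n\to\infty}b_n/\mu_n\ge L$. The only substantial obstacle in this argument is the boundary asymptotics $\widehat{\om}(t)/\widetilde{\om}(t)\to1$; the remaining steps are routine.
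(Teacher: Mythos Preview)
The paper does not supply its own proof of this proposition: it is quoted verbatim from \cite[Theorem~2.4]{HKLRS} and then applied as a black box in the proof of Theorem~\ref{t_ess}. There is therefore nothing in the paper to compare your argument against.

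On the merits of your attempt: the direction $R\le L$ is standard and correct, granted the side assumption $g\in H^\infty_\nu$ (not part of the stated hypotheses, though harmless in the applications here). The direction $L\le R$, however, rests entirely on the boundary asymptotic $\widehat{\om}(t)/\widetilde{\om}(t)\to 1$, which you do not prove. The references \cite{BDL99, Mo2k} give only the two-sided comparison $\widehat{\om}\asymp\widetilde{\om}$, not the sharpened limit, and your Legendre-transform heuristic does not close the gap: saying that ``the discretization loss is $o(1)$, hence negligible against $\log\widetilde{\om}(t)\to\infty$'' would, even if the $o(1)$ claim were justified, at best give $\log\widehat{\om}(t)-\log\widetilde{\om}(t)\to 0$ only if that loss actually tends to zero rather than merely staying bounded; a bounded loss yields only $\widehat{\om}\asymp\widetilde{\om}$, and ``negligible against $\log\widetilde{\om}\to\infty$'' suggests you are really arguing $\log\widehat{\om}/\log\widetilde{\om}\to 1$, which is strictly weaker than $\widehat{\om}/\widetilde{\om}\to 1$. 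You correctly flag this as the sole substantial obstacle, but as written it remains a genuine gap; the fallback of verifying it directly for the specific weights $\Om_{j,\beta}$ would salvage the application in Theorem~\ref{t_ess} but not the general proposition.
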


\begin{theorem}\label{t_ess}
Let $\al, \beta \in\Rbb$, $J\in\zz$, $J>\al$ and
let $N\in \zz$ be the minimal number such that $N\ge \beta$.
Let the weight functions $\Om_{j, \beta}$, $j=0,1,\dots, J$, be those defined by \eqref{e_Omjb}.
Assume that the weighted composition operator $\cph^g: \lip^\beta(\Dbb) \to \lip^\al(\Dbb)$
is bounded.
Then
\begin{equation}\label{e_ess}
\begin{aligned}
  \|\cph^g\|_{e, \lip^\beta \to \lip^\al}
&\asymp \max_{N\le j \le J} \limsup_{|\ph(z)|\to 1-}
\frac{|G_j[g,\ph,J](z)| \Om_{j, \beta}(|\ph(z)|)}{(1-|z|)^{\al-J}} \\
&=\max_{N\le j \le J} \limsup_{n\to\infty}
\frac{\|G_j[g,\ph,J] \ph^n\|_{\lip^{\al-J}}}
{\|z^n\|_{H^\infty_{\Om_{j, \beta}}}}
\end{aligned}
\end{equation}
In particular, $\cph^g: \lip^\beta(\Dbb) \to \lip^\al(\Dbb)$ is compact if $N>J$.
\end{theorem}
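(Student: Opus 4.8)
The plan is to assemble Theorem~\ref{t_ess} from the three results already in hand: the upper estimate of Proposition~\ref{p_upper}, the lower estimate of Proposition~\ref{p_ess}, and the monomial reformulation of Proposition~\ref{p_zn_ess}. The first equivalence in \eqref{e_ess} is immediate: Proposition~\ref{p_ess} gives
\[
\|\cph^g\|_{e, \lip^\beta \to \lip^\al} \gtrsim \max_{N\le j \le J} \limsup_{|\ph(z)|\to 1-} |G_j[g,\ph,J](z)| (1-|z|)^{J-\al} \Om_{j, \beta}(|\ph(z)|),
\]
while Proposition~\ref{p_upper} gives the reverse inequality with $\max$ replaced by $\sum_{j=N}^J$, and a finite sum of nonnegative quantities is comparable to their maximum. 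Since $(1-|z|)^{J-\al} = (1-|z|)^{-(\al-J)}$, this is exactly the first line of \eqref{e_ess}.

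For the second equality I would apply Proposition~\ref{p_zn_ess} term by term. Fix $j\in\{N, N+1, \dots, J\}$, set $\om = \Om_{j,\beta}$, $\nu(t) = (1-t)^{\al-J}$, and take the weight $g$ in that proposition to be $G_j[g,\ph,J]$ (which lies in $\hol(\Dbb)$ because $g, \ph \in \hol(\Dbb)$). As already recorded in Section~\ref{s_bdd}, $\Om_{j,\beta} \asymp \widetilde{\Om}_{j,\beta}$ for every parameter $j$ and $\beta$ under consideration, so replacing $\widetilde{\om}$ by $\om$ in the left-hand side of Proposition~\ref{p_zn_ess} changes it only by a multiplicative constant. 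Also, $\|G_j[g,\ph,J]\ph^n\|_{H^\infty_\nu} = \sup_z |G_j[g,\ph,J](z)||\ph(z)|^n (1-|z|)^{J-\al}$, which is precisely $\|G_j[g,\ph,J]\ph^n\|_{\lip^{\al-J}}$ since $\al - J < 0$ and $\lip^{\al-J}(\Dbb)$ is the growth space with weight $(1-t)^{\al-J}$ (here one uses $J > \al$, and the norm on $\lip^{\gamma}$ for $\gamma<0$ has no boundary derivative terms because the smallest integer exceeding $\gamma$ is $0$). Hence for each such $j$,
\[
\limsup_{|\ph(z)|\to 1-} \frac{|G_j[g,\ph,J](z)| \Om_{j, \beta}(|\ph(z)|)}{(1-|z|)^{\al-J}} \asymp \limsup_{n\to\infty} \frac{\|G_j[g,\ph,J] \ph^n\|_{\lip^{\al-J}}}{\|z^n\|_{H^\infty_{\Om_{j, \beta}}}},
\]
and taking the maximum over $j\in\{N,\dots,J\}$ on both sides yields the second comparability in \eqref{e_ess}. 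Strictly speaking Proposition~\ref{p_zn_ess} is an equality rather than a comparability, but after the $\Om_{j,\beta}\asymp\widetilde{\Om}_{j,\beta}$ substitution we only retain $\asymp$; this is consistent with the overall $\asymp$ in the statement. Finally, the case $N>J$: here the index set $\{N,\dots,J\}$ is empty, so the right-hand side of \eqref{e_ess} is (by convention) zero, whence $\|\cph^g\|_{e,\lip^\beta\to\lip^\al} = 0$, i.e.\ $\cph^g$ is compact; this also follows directly from the last sentence of Proposition~\ref{p_upper}.

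I do not anticipate a genuine obstacle here — the theorem is a packaging of earlier propositions — but the point requiring the most care is the identification of the two supremum-type quantities with the objects appearing in Propositions~\ref{p_zn_w} and~\ref{p_zn_ess}, namely checking that $\|h\,\ph^n\|_{\lip^{\al-J}}$ really coincides with the growth-space norm $\|h\,\ph^n\|_{H^\infty_\nu}$ for $\nu(t)=(1-t)^{\al-J}$ (valid precisely because $\al-J<0$ so the defining integer $J'$ for $\lip^{\al-J}$ is $0$ and the norm is purely a weighted sup), and that the weight comparison $\Om_{j,\beta}\asymp\widetilde\Om_{j,\beta}$ is being invoked with the correct parameters in each of the finitely many regimes of \eqref{e_Omjb}. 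Once these bookkeeping matters are settled, the chain of (in)equalities closes and the theorem follows.
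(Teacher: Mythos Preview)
Your proposal is correct and follows essentially the same route as the paper: combine Propositions~\ref{p_upper} and~\ref{p_ess} for the two-sided $\asymp$ estimate, then invoke Proposition~\ref{p_zn_ess} with $\om=\Om_{j,\beta}$, $\nu(t)=(1-t)^{\al-J}$ and $g=G_j[g,\ph,J]$ for each $j$ to pass to the monomial form. Your remark that the $\Om_{j,\beta}\asymp\widetilde{\Om}_{j,\beta}$ substitution technically only yields $\asymp$ on the second line is well taken, though the paper's own proof asserts $\widetilde{\Om}_{j,\beta}=\Om_{j,\beta}$ there; in either reading the overall $\asymp$ conclusion is unaffected.
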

\begin{proof}
Firstly, combining Propositions~\ref{p_upper} and \ref{p_ess}, we have the required equivalence.
Secondly, if $N\le j \le J$, then $\Om_{j, \beta}$ is a weight function and $\widetilde{\Om}_{j, \beta} = \Om_{j, \beta}$; hence, Proposition~\ref{p_zn_ess} applies
 with $\om = \Om_{j, \beta}$, $\nu(t) =(1-t)^{\al-J}$ and $g=G_j[g, \ph, J]$.
\end{proof}

\bibliographystyle{amsplain}

\end{document}